\newtheorem{proposition}{Proposition}
\newcommand{\replyone}[1]{{\color{black}#1}}
\newcommand{\replytwo}[1]{{\color{black}#1}}
\begin{document}

\title{A multilayer level-set method for eikonal-based traveltime tomography}

\author[1]{Wenbin Li} \ead{liwenbin@hit.edu.cn}
\author[2]{Ken K.T. Hung} \ead{ktkhung@connect.ust.hk}
\author[2]{Shingyu Leung\corref{cor1}} \ead{masyleung@ust.hk}

\address[1]{School of Science, Harbin Institute of Technology, Shenzhen, Shenzhen, 518055, China}
\address[2]{Department of Mathematics, Hong Kong University of Science and Technology, Clear Water Bay, Hong Kong}

\cortext[cor1]{Corresponding author}

\begin{abstract}
We present a novel multilayer level-set method (MLSM) for eikonal-based first-arrival traveltime tomography.
Unlike classical level-set approaches that rely solely on the zero-level set, the MLSM represents multiple phases through a sequence of $i_n$-level sets ($n = 0, 1, 2, \cdots$).
Near each $i_n$-level set, the function is designed to behave like a local signed-distance function, enabling a single level-set formulation to capture arbitrarily many interfaces and subregions.
Within this Eulerian framework, first-arrival traveltimes are computed as viscosity solutions of the eikonal equation, and Fr\'{e}chet derivatives of the misfit are obtained via the adjoint state method.
To stabilize the inversion, we incorporate several regularization strategies, including multilayer reinitialization, arc-length penalization, and Sobolev smoothing of model parameters.
In addition, we introduce an illumination-based error measure to assess reconstruction quality.
Numerical experiments demonstrate that the proposed MLSM efficiently recovers complex discontinuous slowness models with multiple phases and interfaces.
\end{abstract}

\thispagestyle{plain} \maketitle 

\section{Introduction}
\label{Sec:Introduction}
Traveltime tomography is a widely used method in geophysical and medical imaging that determines the spatial distribution of a medium's wave-propagation velocity (or its reciprocal, slowness) by inverting measured wave traveltimes~\cite{borgerlinetc87,zelsmi92,leuqia07,glosha77,hoocar14}.
This technique is broadly applicable to different wave phenomena, including seismic, electromagnetic, and acoustic waves.
The method is predicated on the principle that the traveltime between a source and a receiver represents a path integral of the medium's slowness, such that variations in the measured traveltimes reveal heterogeneity within the medium being imaged.
Due to the multiple ray paths that can connect a source and \replyone{a} receiver, traveltime tomography can be formulated using either first-arrival~\cite{leuqia06,tainobchacal09} or multi-arrival data~\cite{dellai95,leuqia07}.
For this work, we focus specifically on the inverse problem of first-arrival traveltime tomography.

Traditional traveltime tomography methods primarily rely on the Lagrangian ray-tracing technique, solving a set of ordinary differential equations derived from Fermat's principle to calculate the trajectory of wave rays and their traveltime~\cite{nol09}.
The technique is termed Lagrangian because it explicitly tracks the motion of rays through space, in contrast to Eulerian methods which observe properties at fixed spatial points. 
A major drawback of the ray-tracing approach is its non-uniform resolution, where ray paths are highly sensitive to initial conditions and the medium's velocity structure.
In \replyone{a} media with strong heterogeneity, the ray-tracing technique inevitably produces an irregular distribution of ray paths.
This uneven coverage leaves portions of the model poorly constrained (e.g., forming shadow zones), and may result in an ill-conditioned system of equations in the traveltime tomography~\cite{ber00a,ber00b}. 

An alternative approach is to compute the traveltime in an Eulerian framework.
This is achieved by solving the eikonal equation across a computational grid, where typical approaches include the fast marching method~\cite{set99siamrev} and the fast sweeping method~\cite{zha05, qiazhazha07, zha07, detmilgibmin13, liqia20newton}.
Based on the fast sweeping method,~\cite{leuqia06} \replyone{has developed} a \replyone{Partial Differential Equation (PDE)}-based Eulerian approach to traveltime tomography using first arrivals. 
In that work, the first-arrival traveltime \replyone{was} modeled as the viscosity solution~\cite{cralio83} of the eikonal equation, which \replyone{was} computed efficiently by the fast sweeping method.
The traveltime tomography \replyone{was} then performed by minimizing the data misfit under the constraint of \replyone{the} eikonal equation, where an adjoint state method~\cite{ple06} \replyone{was} proposed to solve the PDE-constrained optimization problem.
To stabilize the gradient flow, a Sobolev regularization \replyone{was} imposed on the velocity perturbations, and so the algorithm of~\cite{leuqia06} \replyone{led} to a smooth velocity model.

In practice, many important velocity/slowness models are characterized by sharp discontinuities representing faults, layer boundaries, or interfaces between distinct units.
For discontinuous velocity/slowness models,~\cite{ost00} has introduced the concept of an extended viscosity solution to justify the well-posedness of the eikonal equation.
This solution exists and is Lipschitz continuous when the slowness function is piecewise Lipschitz continuous, and it corresponds to the first-arrival traveltime.
Moreover, any finite difference scheme that is monotonic in slowness and converges to the viscosity solution for continuous slowness must also converge to the extended viscosity solution for discontinuous slowness~\cite{kaiost01}. 
In~\cite{lileu13}, we have developed a level-set adjoint state method for first-arrival traveltime tomography where the underlying slowness is discontinuous.
In that work, the level-set method \replyone{was} employed to represent a piecewise continuous slowness function.
The extended viscosity solution of the eikonal equation \replyone{was} computed to match the first-arrival traveltime, and an adjoint state method similar to that of~\cite{leuqia06} \replyone{was} proposed to solve the tomography problem.
In~\cite{lileuqia14}, we \replyone{extended} the level-set adjoint state method to solve crosswell transmission-reflection traveltime tomography problems, where the level-set function describes \replyone{both the} slowness discontinuities and the geometry of reflectors.

The level-set method~\cite{oshset88,oshfed06} has been widely used in various fields involving topological changes and interface structures, including multiphase flow simulation, shape optimization, computational geometry, and computer graphics.
In terms of inverse problems, the level-set method \replyone{was} first used for inverse obstacle problems in~\cite{san96}, and thereafter, it \replyone{was} applied to a variety of inverse problems.
For instance, in~\cite{litlessan98}, it \replyone{was} used to reconstruct 2-D binary obstacles; in~\cite{housolzha04} and~\cite{dorles06}, it \replyone{was} used for inverse scattering to determine the geometry of extended targets; in~\cite{benmil07} for electrical resistance tomography in medical imaging; in~\cite{isaleuqia11,isaleuqia13,liqia21} for inverse gravimetry; and in~\cite{liqiali20,liwanfan22} for magnetic inverse problems.

Classical level-set methods represent interfaces via the zero-level set of a single level-set function, where the interfaces (connected or \replyone{disjoint}) partition the domain into two phases. 
However, many practical problems involve heterogeneous structures with multiple phases and interacting interfaces.
For instance, the Dirichlet $k$-partition problem seeks to partition a domain into $k$ distinct subdomains to minimize the sum of their smallest eigenvalues~\cite{chuleu21}.
In tomography problems, there \replyone{are} significant interest\replyone{s} in recovering \replyone{a} media with multi-component and multilayered architectures.
To represent such \replyone{multiphase} structures, the multiple level-set method has been developed~\cite{zhachamerosh96,vescha02, decleitai09,vanasclei10}.
This approach introduces more than one level-set functions to define the interfaces between distinct components.
For instance, in~\cite{zhachamerosh96}, it associates to each \replyone{subregion} a level-set function, requiring $n$ level-set functions for $n$ different phases.
In~\cite{vescha02, decleitai09,vanasclei10}, a compact formulation \replyone{was} developed that allows $n$ level-set functions to represent up to $2^n$ distinct phases.
The use of multiple level-set functions complicates the interface representation, and significantly increases the computational cost for inverse problems.

In this paper, we propose a novel multilayer level-set method (MLSM) for eikonal-based first-arrival traveltime tomography.
The proposed MLSM uses a single level-set function to represent multiple subregions of distinct phases. Unlike conventional level-set methods that only utilize the zero-level set, the MLSM defines interfaces of multiple phases using a series of $i_n$-level-sets ($n=0,1,2\cdots$).
Near each $i_n$-level-set, the multilayer level-set function is designed to behave as a local signed-distance function.
This framework allows for the delineation of arbitrarily many distinct subregions using only one level-set function.
We then utilize the MLSM to reconstruct \replyone{multiphase} heterogeneous structures in traveltime tomography. 

The remainder of this paper is structured as follows.
Section 2 develops the formulation of the multilayer level-set function, and explains how to use it to represent piecewise structures with multiple interfaces.
Section 3 proposes the multilayer level-set method (MLSM) for eikonal-based traveltime tomography.
In this section, we adopt the Eulerian framework of~\cite{leuqia06,lileu13} to compute the Fr\'echet derivatives of the traveltime data misfit, develop comprehensive regularization techniques for the MLSM formulation, and propose an illumination-based error measure to evaluate the performance of the inversion algorithm.
Section 4 presents numerical results to demonstrate the algorithm's efficacy, and Section 5 draws conclusions.

\section{A novel multilayer level-set method (MLSM)}
\subsection{Formulation and interface representation}

The multilayer level-set method (MLSM) aims to represent interfaces of multiple phases by a single level-set function.
Considering a real function $\phi(\mathbf{x})$ and a series of real numbers $i_0<i_1<\cdots<i_{N-1}$ such that the following relation is satisfied.
 \begin{equation}\label{eqn2.1}
 \{ \phi(\mathbf{x})<i_0 \} \subset \{ \phi(\mathbf{x})<i_1 \} \subset \cdots \subset \{ \phi(\mathbf{x})< i_{N-1} \}.
 \end{equation}
This implies a multilayer structure, $\Omega_0\subset\Omega_1\subset\cdots\subset\Omega_{N-1}$, where 
\begin{equation}\label{eqn2.2}
\Omega_n=\{\mathbf{x}\mid\phi(\mathbf{x})<i_n\},\quad  \forall n=0,1,\cdots,N-1.
\end{equation}
Then the interface $\partial\Omega_n$ can be represented by the $i_n$-level-set,
\begin{equation}\label{eqn2.3}
\partial\Omega_n=\{\mathbf{x}\mid\phi(\mathbf{x})=i_n\},\quad \forall n=0,1,\cdots,N-1.
\end{equation}
With this formulation, a single level-set function can represent a series of interfaces of different phases.
It is natural to take the numbers $\{i_n\}_{n=0}^{N-1}$ as an arithmetic sequence, i.e., $\Delta i=i_n-i_{n-1}$ is constant; for example, $i_n=n$, or $i_n=\frac{n}{2}$ in a small computational domain.

Like the typical level-set method, the level-set function $\phi$ is expected to have a smooth transition through the interfaces $\partial\Omega_n$.
As a result, we design the level-set function $\phi(\mathbf{x})$ to behave as a signed-distance function near each $\partial\Omega_n$.
Let $d_n(\mathbf{x})$ denote the signed distance function to $\partial\Omega_n$,
\begin{equation}\label{eqn2.4}
d_n(\mathbf{x})=\left\{
\begin{array}{rl}
-\mathrm{dist}(\mathbf{x},\partial\Omega_n) \,,& \mathbf{x}\in\Omega_n \\
\mathrm{dist}(\mathbf{x},\partial\Omega_n) \,,& \mathbf{x}\in\Omega^c_n
\end{array}
\right. ,
\end{equation}
where $\mathrm{dist}(\mathbf{x},\partial\Omega_n):=\inf_{\tilde{\mathbf{x}}\in\partial\Omega_n}\|\mathbf{x}-\tilde{\mathbf{x}}\|_2$, $\forall n=0,1,\cdots,N-1$.
The multilayer level-set function is then designed in the following way:
\begin{equation}\label{eqn2.5}
\phi(\mathbf{x})=i_{n_\mathbf{x}}+\min\left(\max\left(d_{n_\mathbf{x}}(\mathbf{x}), -\frac{\Delta i}{2} \right),\frac{\Delta i}{2}\right)\,,\quad\mathrm{where}\ n_\mathbf{x}:=\mathrm{argmin}_n |d_n(\mathbf{x})|\,.
\end{equation}
With formula (\ref{eqn2.5}), \replyone{each} $\partial\Omega_n$ is represented by the $i_n$-level-set of $\phi(\mathbf{x})$; around $\partial\Omega_n$, $\phi(\mathbf{x})$ has the form of $\phi(\mathbf{x})=i_n+d_n(\mathbf{x})$.
Moreover, we impose a truncation on each signed-distance function $d_n(\mathbf{x})$, making it between $-\frac{\Delta i}{2}$ and $\frac{\Delta i}{2}$, $\forall n=0,1,\cdots,N-1$. Then the multilayer level-set function $\phi(\mathbf{x})$ lies in the region $\left[i_0-\frac{\Delta i}{2}, i_{N-1}+\frac{\Delta i}{2} \right]$. \replytwo{Note that $\phi(\mathbf{x})$ is not necessarily a continuous function; continuity is only required in the vicinity of each interface.}

\replytwo{Formula (\ref{eqn2.5}) together with (\ref{eqn2.4}) provides a robust framework for characterizing complex interface structures. This formulation naturally accommodates sequentially nested interfaces, as demonstrated in Example 1 (Section \ref{subsubsec_EX1} below). Furthermore, the multilayer level-set function can represent disjoint regions (see Example 2, Section \ref{subsubsec_EX2}); the key point is that $\Omega_n$ can be defined as either the interior or exterior of a region. Notably, while the level sets follow a strictly ordered relationship, for instance, $\{ \phi(\mathbf{x})<0 \} \subset \{ \phi(\mathbf{x})<1 \} \subset \{ \phi(\mathbf{x})< 2 \}$, the structure represented by the 0-level-set can appear to lie between those of the 1- and 2-level sets by appropriately defining $d_n(\mathbf{x})$ and $\phi(\mathbf{x})$; Example 3 in Section \ref{subsubsec_EX3} illustrates this.}

\replytwo{The following three examples provide concrete implementations of this formulation.}

\begin{figure}
 \raggedright
(a){{\includegraphics[scale=0.45,angle=0]{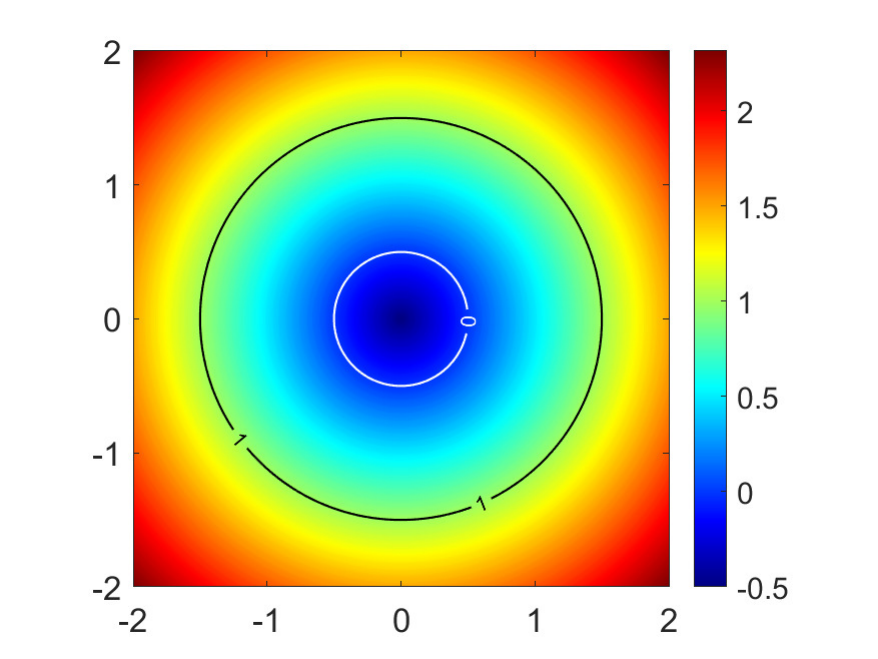}}}
(b){{\includegraphics[scale=0.45,angle=0]{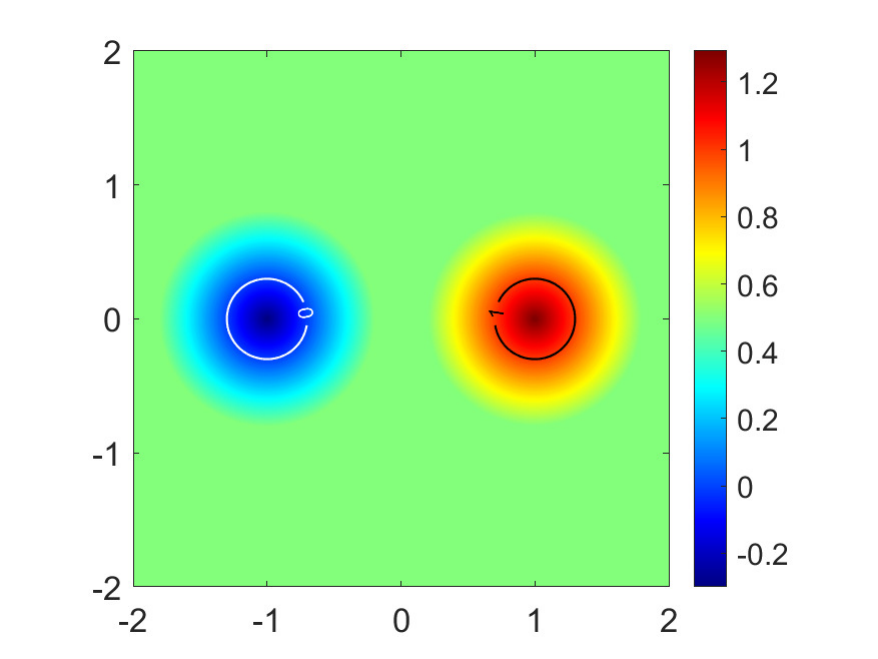}}}
\replytwo{(c){{\includegraphics[scale=0.45,angle=0]{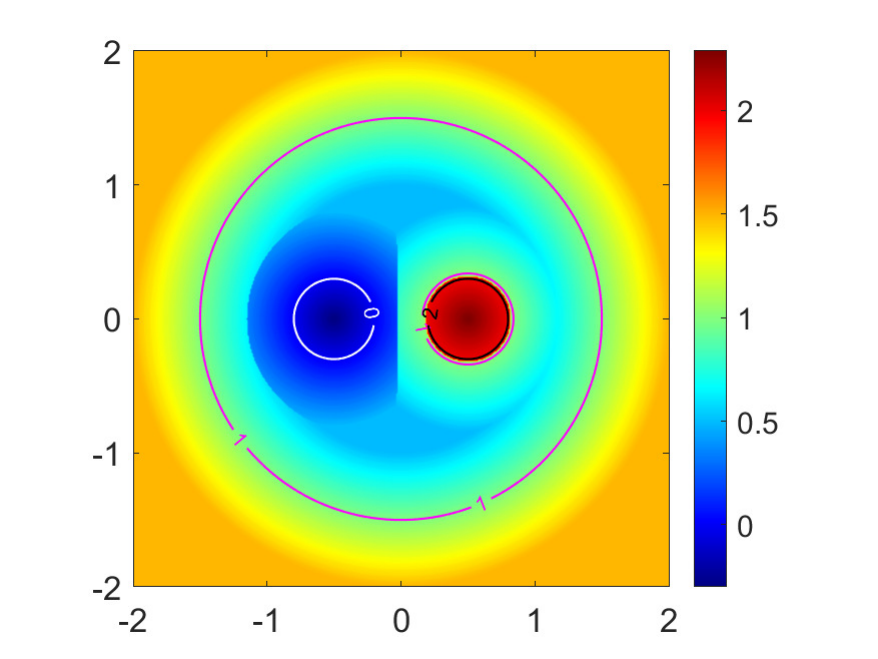}}}}
\caption{\replytwo{Three examples of the multilayer level-set function. (a) $\phi(x,y)$ from Section \ref{subsubsec_EX1}; (b) $\phi(x,y)$ from Section \ref{subsubsec_EX2}; (c) $\phi(x,y)$ from Section \ref{subsubsec_EX3}. In (a) and (b), the white line indicates the 0-level-set, and the black line indicates the 1-level-set; in (c), the white line indicates the 0-level-set, the magenta line indicates the 1-level-set, and the black line indicates the 2-level-set.}}
\label{Fig1}
\end{figure}

\subsubsection{Example 1: $\phi(x,y)=\sqrt{x^2+y^2}-0.5$.}\label{subsubsec_EX1}
It is the signed-distance function of the circle $\sqrt{x^2+y^2}=0.5$.
\replyone{By considering} $i_n=n$ and so $\Delta i=1$, it is also a \replyone{multilayer} level-set function satisfying equation (\ref{eqn2.5}).
Here, $\Omega_n=\{(x,y)\mid \sqrt{x^2+y^2}<n+0.5\}$, and $\partial\Omega_n=\{(x,y)\mid \sqrt{x^2+y^2}=n+0.5\}$. Figure \ref{Fig1}\,(a) plots $\phi(x,y)$ in $[-2,2]\times[-2,2]$, where we indicate its 0-level-set and 1-level-set, respectively.

\subsubsection{Example 2}\label{subsubsec_EX2}
Let $d_0(x,y)=\sqrt{(x+1)^2+y^2}-0.3$ and $d_1(x,y)=0.3-\sqrt{(x-1)^2+y^2}$. We define the level-set function,
\begin{equation*}
\phi(x,y)=\left\{
\begin{array}{rl}
\min\left(\max\left(d_0(x,y), -\frac{1}{2} \right),\frac{1}{2}\right)\,, & |d_0(x,y)|<|d_1(x,y)| \\
1+\min\left(\max\left(d_1(x,y), -\frac{1}{2} \right),\frac{1}{2}\right)\,, & |d_0(x,y)|\ge|d_1(x,y)|
\end{array}
\right.\,.
\end{equation*}
It is a multilayer level-set function in the form of equation (\ref{eqn2.5}). Here, $N=2$, $i_n=n,\, \forall n=0,1$, and so $\Delta i=1$.
Considering equations (\ref{eqn2.2}) and (\ref{eqn2.3}), the represented regions and interfaces are as follows:
\begin{eqnarray*}
\Omega_0=\{(x,y)\mid \sqrt{(x+1)^2+y^2}<0.3 \}, & \partial\Omega_0=\{(x,y)\mid \sqrt{(x+1)^2+y^2}=0.3\}\,; \\
\Omega_1=\{(x,y)\mid \sqrt{(x-1)^2+y^2}>0.3 \}, & \partial\Omega_1=\{(x,y)\mid \sqrt{(x-1)^2+y^2}=0.3\}\,.
\end{eqnarray*}
Figure \ref{Fig1}\,(b) plots this $\phi(x,y)$ in $[-2,2]\times[-2,2]$, where we indicate its 0-level-set and 1-level-set, respectively.

{
\subsubsection{Example 3}\label{subsubsec_EX3}
Let 
\begin{eqnarray*}
&&\Omega_0=\{(x,y)\mid \sqrt{(x+0.5)^2+y^2}<0.3 \}\,; \\
&&\Omega_1=\{(x,y)\mid \sqrt{(x-0.5)^2+y^2}>0.34\ \ \mathrm{and}\ \ \sqrt{x^2+y^2}<1.5 \}\,; \\
&&\Omega_2=\{(x,y)\mid \sqrt{(x-0.5)^2+y^2}>0.3 \}\,.
\end{eqnarray*}
According to (\ref{eqn2.4}), we define
\begin{eqnarray*}
&&d_0(x,y)= \sqrt{(x+0.5)^2+y^2}-0.3 \,; \\
&&d_1(x,y)=\max\left(0.34-\sqrt{(x-0.5)^2+y^2},\ \sqrt{x^2+y^2}-1.5\right)\,; \\
&&d_2(x,y)=0.3-\sqrt{(x-0.5)^2+y^2}\,.
\end{eqnarray*}
Then following equation (\ref{eqn2.5}), we have the multilayer level-set function,
\begin{equation*}
\phi(x,y)=\left\{
\begin{array}{rl}
\min\left(\max\left(d_0(x,y), -\frac{1}{2} \right),\frac{1}{2}\right)\,, & |d_0(x,y)|\le \min\left\{|d_1(x,y)|, |d_2(x,y)| \right\} \\
1+\min\left(\max\left(d_1(x,y), -\frac{1}{2} \right),\frac{1}{2}\right)\,, & |d_1(x,y)|\le \min\left\{|d_0(x,y)|, |d_2(x,y)| \right\} \\
2+\min\left(\max\left(d_2(x,y), -\frac{1}{2} \right),\frac{1}{2}\right)\,, & |d_2(x,y)|\le \min\left\{|d_0(x,y)|, |d_1(x,y)| \right\}
\end{array}
\right.\,.
\end{equation*}
Figure \ref{Fig1}\,(c) plots this $\phi(x,y)$ in $[-2,2]\times[-2,2]$, showing its 0-, 1-, and 2-level sets. In this example, the structure represented by the $0$-level-set appears to be positioned between those of the $1$- and $2$-level sets, despite the fact that the $0$-level-set does not cross the $1$-level-set to approach the $2$-level-set.
}

\subsection{Representing piecewise structures with multiple interfaces}
Consider a piecewise function as follows,
\begin{equation} \label{eqn2.6}
S(\mathbf{x})=\left\{
\begin{array}{lcl}
S_n (\mathbf{x}) & , & \mathbf{x}\in D_n,\ n=0,1,\cdots,N-1 \\
S_N (\mathbf{x}) & , & \mathbf{x}\in\left(\bigcup_{n=0}^{N-1} D_n\right) ^c
\end{array}
\right. \,.
\end{equation}
We explain how to represent $S(\mathbf{x})$ using the multilayer level-set function $\phi(\mathbf{x})$ proposed above.
\replyone{Consider} the definition of $\Omega_n$ in equation (\ref{eqn2.2}).
Let $D_0=\Omega_0=\{\mathbf{x} \mid \phi(\mathbf{x})<i_0\}$, and $D_n=\Omega_n\setminus\Omega_{n-1}=\{\mathbf{x} \mid i_{n-1}\le\phi(\mathbf{x})<i_n\}$, $\forall n=1,\cdots,N-1$.
The multilayer level-set representation of $S(\mathbf{x})$ reads as follows,
\begin{equation} \label{eqn2.7}
S(\mathbf{x})=\sum_{n=0}^{N-1} p_n\replytwo{(\mathbf{x})}\left(1-H(\phi\replytwo{(\mathbf{x})}-i_n) \right) + p_N\replytwo{(\mathbf{x})} H(\phi\replytwo{(\mathbf{x})}-i_{N-1})\,,
\end{equation}
where $H(\cdot)$ denotes the Heaviside function: $H(x)=\left\{\begin{array}{ccc}0 &, &x<0\\ 1 &,& x\ge 0  \end{array}\right.$.
In equation (\ref{eqn2.7}), the parameters $p_n\replytwo{(\mathbf{x})}$ are related to the function values $S_n\replytwo{(\mathbf{x})}$ in the following way,
\begin{equation}\label{eqn2.8}
\left\{
\begin{array}{ll}
S_n(\mathbf{x})=\sum_{k=n}^{N-1} p_k(\mathbf{x}) , & n=0,1,\cdots,N-1 \\
S_N(\mathbf{x})=p_N(\mathbf{x}) &
\end{array}
\right.\,,
\end{equation}
and so
\begin{equation}\label{eqn2.9}
\left\{
\begin{array}{ll}
p_n(\mathbf{x})=S_n(\mathbf{x})-S_{n+1}(\mathbf{x}) , & n=0,1,\cdots,N-2 \\
p_{N-1}(\mathbf{x})=S_{N-1}(\mathbf{x}) & \\
p_N(\mathbf{x})=S_N(\mathbf{x}) &
\end{array}
\right.\,.
\end{equation}
For example, when $N=2$, we have $p_0(\mathbf{x})=S_0(\mathbf{x})-S_1(\mathbf{x})$, $p_1(\mathbf{x})=S_1(\mathbf{x})$, and $p_2(\mathbf{x})=S_2(\mathbf{x})$.

\subsection{Reinitialization of the multilayer level-set function} \label{subsec2.3}
In a typical level-set method, reinitialization aims to maintain the signed-distance property of the level-set function near its 0-level set, which represents the interface structure.
Since the multilayer level-set function $\phi(\mathbf{x})$ represents a series of interfaces $\partial\Omega_n$ using its $i_n$-level-sets, $\forall n=0,1,\cdots, N-1$, the reinitialization is expected to maintain its signed-distance property around all these interfaces.

Firstly, solve the following reinitialization equation, $\forall n=0,1,\cdots, N-1$,
\begin{equation}\label{reinitialization}
\left\{
\begin{array}{l}\displaystyle
\frac{\partial\psi_n}{\partial\xi}+\mathrm{sign}(\phi-i_n)\,\left(|\nabla\psi_n|-1 \right)=0 \\
\psi_n(\mathbf{x},\xi=0)=\phi-i_n
\end{array}
\right.\,.
\end{equation}
$\psi_n(\mathbf{x}):=\psi_n(\mathbf{x},\xi=\infty)$ will be a signed-distance function to the $i_n$-level-set of $\phi(\mathbf{x})$.
In practice, because the reinitialization is repeatedly performed during the iterations of $\phi$, one only needs to evolve the equation for several $\Delta\xi$ steps, e.g. 5 steps.

Then, for every grid point $\mathbf{x}$, find 
\begin{equation}\label{eqn2.11}
n_\mathbf{x} =\mathrm{argmin}_n |\psi_n(\mathbf{x})|\,\replyone{,}
\end{equation}
\replyone{a}nd the multilayer level-set function is reinitialized as
\begin{equation}\label{eqn2.12}
\phi(\mathbf{x})=i_{n_\mathbf{x}}+\min\left(\max\left(\psi_{n_\mathbf{x}}(\mathbf{x}), -\frac{\Delta i}{2}\right),\frac{\Delta i}{2} \right)\,.
\end{equation}
Equations (\ref{eqn2.11}) and (\ref{eqn2.12}) exactly follow the definition of $\phi(\mathbf{x})$ in equation (\ref{eqn2.5}).

For example, when $N=2$, we have
\begin{equation*}
\phi(\mathbf{x})=\left\{
\begin{array}{rl}
i_0+\min\left(\max\left(\psi_0(\mathbf{x}), -\frac{\Delta i}{2} \right),\frac{\Delta i}{2}\right)\,, & |\psi_0(\mathbf{x})|<|\psi_1(\mathbf{x})| \\
i_1+\min\left(\max\left(\psi_1(\mathbf{x}), -\frac{\Delta i}{2} \right),\frac{\Delta i}{2}\right)\,, & |\psi_0(\mathbf{x})|\ge|\psi_1(\mathbf{x})| 
\end{array}
\right.\,.
\end{equation*}

\subsection{Some examples in moving interfaces}
Before addressing the inverse problems, we first demonstrate the performance of the MLSM through various representative examples of moving interfaces.
The interface evolution is governed by the motion law $\phi_t + v_n \lvert \nabla \phi \rvert = 0$, where \( v_n \) denotes the normal \replyone{speed} of the interface.
In these examples, we explore how multiple interfaces, represented within the MLSM framework, interact with each other when different motion laws are applied to different level sets of the same function.
These simple test cases highlight the effectiveness and ability of the MLSM.

\subsubsection{Motions in the normal direction}
\label{SubSubSec:NomalMotion}

\begin{figure}
\centering
\includegraphics[width=0.45\textwidth,angle=0]{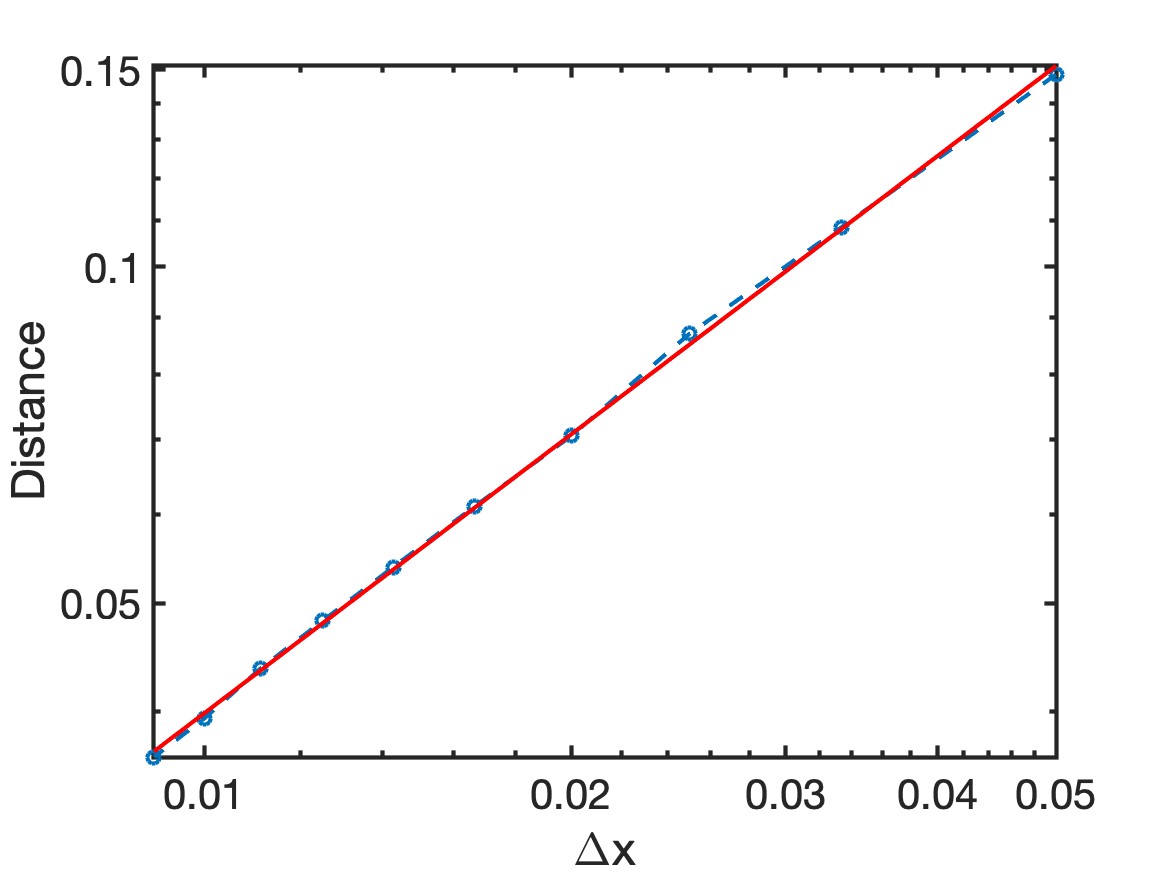}
\caption{(Section \ref{SubSubSec:NomalMotion} Test 1) The inner circle expands while the outer circle shrinks. The average distance between the two circles at the final time is computed using different mesh resolutions. The red line shows the least-squares fit, given by $1.8(\Delta x)^{0.83}$.
}
\label{Fig:NormalTwoCirclesFinal}
\end{figure}

\paragraph{Test 1} 
In this test, we consider the domain $[-2.5, 2.5]^2$ and initialize the multilayer level-set function with two concentric circles of radii $1$ and $2$, respectively.
The inner circle expands with a constant normal \replyone{speed} $v_n = 1$, while the outer circle shrinks at a constant \replyone{normal} speed $v_n = -1$.
Although the two interfaces meet at $t = 0.5$ and remain stationary thereafter, we evaluate the average distance between them at the final time $t_f = 1$.

The convergence plot in Figure~\ref{Fig:NormalTwoCirclesFinal} shows an almost linear trend.
As the mesh is refined, the average distance between the circles decreases.
This indicates that the MLSM captures the proximity of the interfaces more accurately with finer meshes, providing a more accurate representation of their behavior.

\begin{figure}
\centering
\includegraphics[width=0.99\textwidth,angle=0]{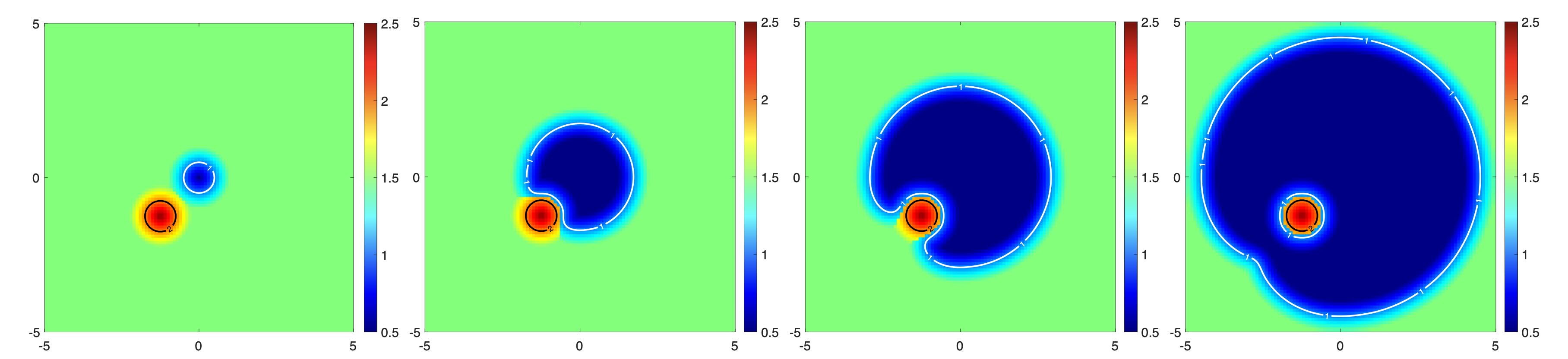}
\caption{(Section \ref{SubSubSec:NomalMotion} Test 2) The upper-right curve evolves according to motion in the normal direction, while the lower-left circle remains stationary. From left to right, the panels show the evolution of the solution using a mesh with $n = 101^2$ points.
}
\label{Fig:NormalTwoCircles101}
\end{figure}

\begin{figure}
\centering
\includegraphics[width=0.99\textwidth,angle=0]{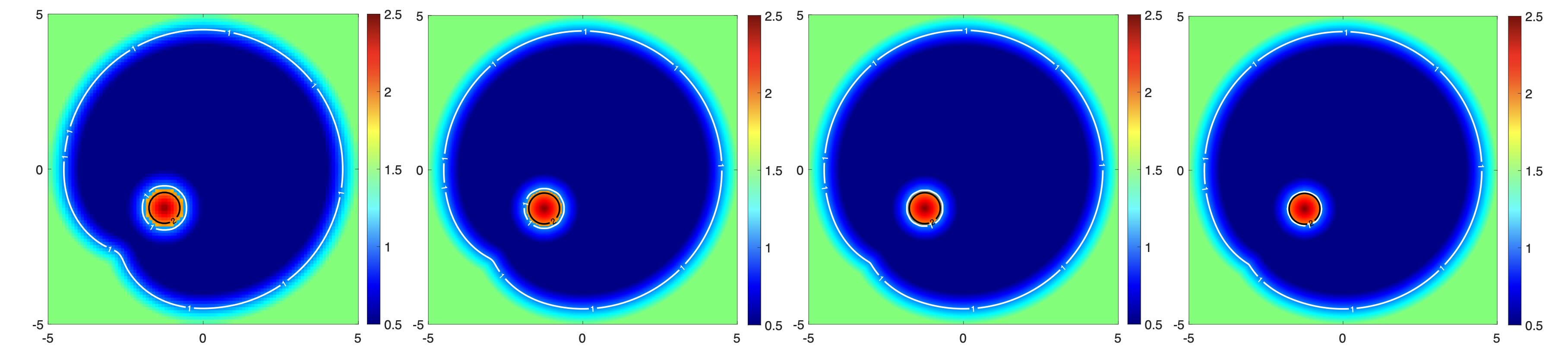}
\caption{(Section \ref{SubSubSec:NomalMotion} Test 2) The upper-right curve evolves according to motion in the normal direction, while the lower-left circle remains stationary. From left to right, the final solutions are shown using meshes with $n = 101^2$, $201^2$, $401^2$, and $801^2$ points, respectively.
}
\label{Fig:NormalTwoCirclesDiffn}
\end{figure}

\paragraph{Test 2} 

In many applications, moving interfaces interact with solid boundaries.
However, conventional level-set methods often encounter difficulties when updating the level-set function near boundaries.
To address this, one may introduce various extrapolation techniques to obtain ghost level-set values or normal vectors outside the computational domain~\cite{spe05,zahguskre09,gabvan24}, and develop new numerical schemes for reinitialization~\cite{zhaxuren22}.
In this example, we consider the interaction between a moving interface (represented by one level-set value in the MLSM framework) and a fixed solid boundary represented by another level-set value within the same \replyone{multilayer level-set} function.

In this test, we examine how interfaces behave as they approach each other.
The computational domain is $[-5,5]^2$, and the initial level-set function consists of two circles of radius $0.5$, centered at the origin and at $(-1.25,-1.25)$, respectively.
One circle is held fixed, while the other expands with a constant \replyone{normal} speed $v_n = 1$ until the final time $t_f = 4$.
Figure~\ref{Fig:NormalTwoCircles101} shows the computed solution using a mesh with $n = 101^2$ points.
The results demonstrate that the expanding circle does not merge with the obstacle; instead, a thin layer of interface \replyone{is formed} around the obstacle while the remaining parts of the circle continue to expand.
As shown in Figure~\ref{Fig:NormalTwoCirclesDiffn}, the thickness of this layer decreases as the mesh is refined. 
This behavior is consistent with our previous numerical experiment and illustrates the robustness of the MLSM when interfaces come into proximity.

\subsubsection{Motions by Mean Curvature}
\label{SubSubSec:MeanCurvature}

In this example, we consider motion\replyone{s} by mean curvature, in which higher-order derivatives of the level-set function appear in the governing equation.
This example tests the ability of the MLSM to handle more complex geometric flows, where numerical stability and accuracy become more challenging due to the involvement of higher-order spatial derivatives.

\begin{figure}
    \centering
    \includegraphics[width=0.99\textwidth,angle=0]{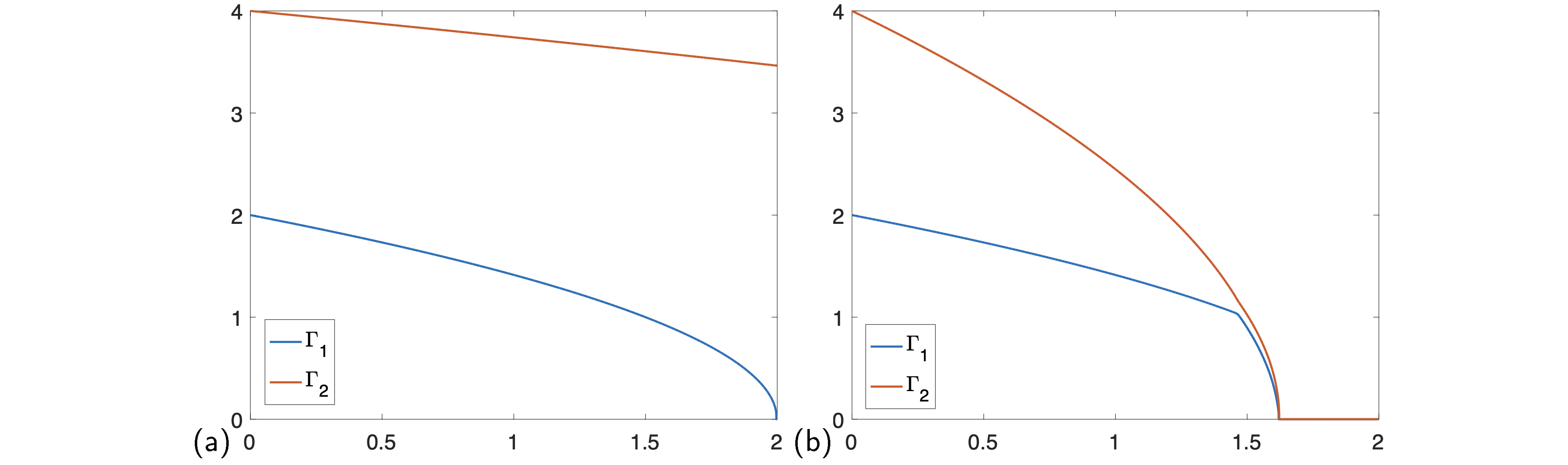}
\caption{(Section~\ref{SubSubSec:MeanCurvature}, Test~1) Time evolution of the mean radii of two circles, \( \Gamma_1 \) and \( \Gamma_2 \), with initial radii of 2 and 4, respectively. (a) In the first case, both circles evolve with normal velocity \( v_n = -\kappa \) using meshes with $401^2$ points. (b) In the second case, the normal velocity of the outer circle \( \Gamma_2 \) is increased to \( v_n = -5\kappa \), again using meshes with $401^2$ points.}
\label{Fig:MeanCurvature_TwoCircles_Shrink}
\end{figure}

\paragraph{Test 1} 
The multilayer level-set function is initialized with two concentric circles of radii $2$ and $4$ centered at the origin.
We first consider the case where both circles shrink with normal \replyone{speed} \( v_n = -\kappa \), where \( \kappa \) denotes the mean curvature.
The exact radius of a shrinking circle is given by \( r(t) = \sqrt{r_0^2 - 2t} \).
In particular, the inner circle will shrink to a point and disappear at \( t = \frac{1}{2}r_0^2 = 2 \).
The solution computed using a $401 \times 401$ mesh is shown in Figure~\ref{Fig:MeanCurvature_TwoCircles_Shrink}(a).
We then modify only \replyone{the normal speed of the outer circle} to be \( v_n = -5\kappa \), while keeping the inner interface moving with \( v_n = -\kappa \).
If the inner interface were absent, the radius of the outer interface would follow the exact solution \( r(t) = \sqrt{r_0^2 - 10t} \), which predicts that it would shrink to a point and disappear at \( t = \frac{1}{10}r_0^2 = 1.6 \).
However, when the inner interface is present, it impedes the inward motion of the outer interface as the two approach each other.
Our numerical results show that once the outer circle reaches the inner circle, they move together and eventually vanish simultaneously at \( t = 2 \), as shown in Figure~\ref{Fig:MeanCurvature_TwoCircles_Shrink}(b).
This behavior highlights an important feature of the MLSM that the motion of one interface can influence and alter the dynamics of another interface on a different level, especially when they come into contact.

\begin{figure}
    \centering
\includegraphics[width=0.99\textwidth,angle=0]{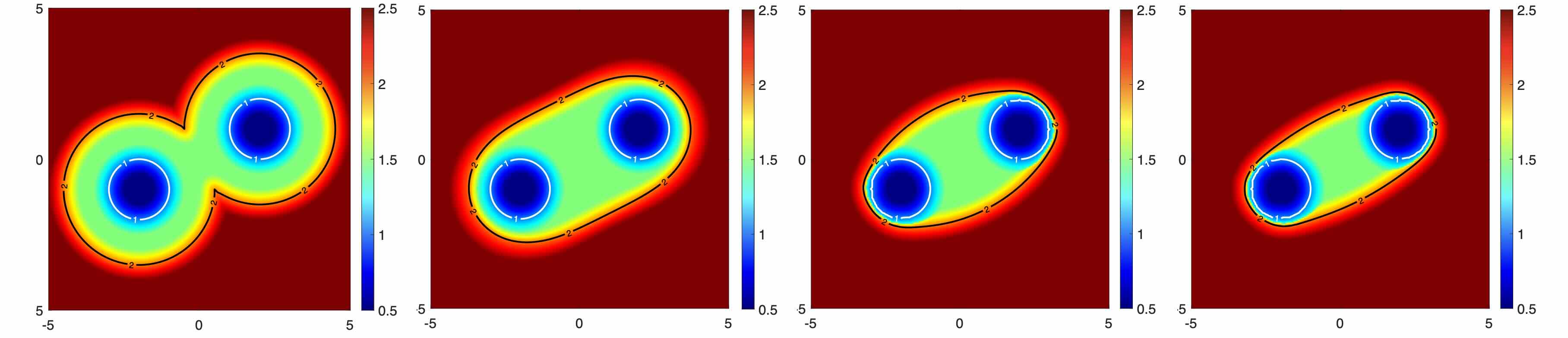}    
\caption{(Section~\ref{SubSubSec:MeanCurvature}, Test~2) The outer curve evolves according to mean curvature, while the inner circles remain stationary. The computation is performed using a mesh with $n = 201^2$ points.}
\label{Fig:MeanCurvature_TwoCircles_Obstacles}
\end{figure}

\paragraph{Test 2} 
In this case, the multilayer level-set function is initialized with an outer interface shaped like two merged circles and two inner circles of equal radius on the same level.
The outer interface evolves according to mean curvature given by \( v_n = -\kappa \), while the inner circles are kept fixed.
Figure~\ref{Fig:MeanCurvature_TwoCircles_Obstacles} shows the initial configuration and the results of the evolution.
As the outer interface evolves, it gradually deforms and adjusts its shape under the influence of mean curvature, while the inner circles remain stationary.
This experiment illustrates the capability of the MLSM to handle interfaces at different levels.
The outer curve undergoes curvature-driven motion, whereas the inner circles remain unaffected, preserving their initial configuration.

\section{Eikonal-based traveltime tomography}
Now we propose the multilayer level-set algorithm for eikonal-based traveltime tomography, where the multilayer level-set function is employed to represent discontinuous slowness structures with multiple phases and interfaces.

\subsection{Problem setup and level-set representation}
Consider the eikonal equation with a point-source boundary condition,
\begin{equation}\label{eqn3.1}
\left\{
\begin{array}{c}
|\nabla T(\mathbf{x})|=S(\mathbf{x})\,,\ \ \ \mathbf{x}\in\Omega \\
T(\mathbf{x}_s)=0
\end{array}
\right.
\end{equation}
where the viscosity solution $T(\mathbf{x})$ represents the first-arrival traveltime, $S(\mathbf{x})$ denotes the slowness parameter which is the reciprocal of wave velocity, and $\mathbf{x}_s$ denotes the location of point source.
The traveltime tomography reads as an inverse problem: Given the \replyone{measured} traveltime data $T^*$ on $\Gamma\subset\partial\Omega$, recover the slowness $S(\mathbf{x})$ in $\Omega$.

In practice, the measurement of traveltime is performed many times with a set of point sources.
Let $\{\mathbf{x}_{s,j}\}_{j=1}^J$ denote the set of point sources, and $T_j(\mathbf{x})$ be the corresponding traveltime,
\begin{equation}\label{eqn3.2}
\left\{
\begin{array}{c}
|\nabla T_j(\mathbf{x})|=S(\mathbf{x})\,,\ \ \ \mathbf{x}\in\Omega \\
T_j(\mathbf{x}_{s,j})=0
\end{array}
\right.\,,\quad j=1,\cdots,J\,.
\end{equation}
Then the inverse problem of traveltime tomography reads: Given the \replyone{measured} traveltime data $T_j^*$ ($j=1,\cdots,J$) on $\Gamma\subset\partial\Omega$, recover the slowness $S(\mathbf{x})$ in $\Omega$.

To reconstruct a discontinuous slowness structure with multiple phases and interfaces, we consider $S(\mathbf{x})$ to be a piecewise continuous function as shown in equation (\ref{eqn2.6}), where $S_n(\mathbf{x})$ is assumed to be continuous, $\forall n=0,\cdots,N$.
The multilayer level-set formulation is employed to represent $S(\mathbf{x})$, as shown in equation (\ref{eqn2.7}).
Here, we have the Fr\'echet derivatives of $S(\mathbf{x})$ in the following way,
\begin{align}
    \frac{\partial S}{\partial\phi}&= -\sum_{n=0}^{N-1} p_n\, \delta(\phi-i_n) + p_N\, \delta\left(\phi-i_{N-1}\right) \label{eqn3.3}\\
\frac{\partial S}{\partial p_n}&=1-H(\phi-i_n) ,\qquad n=0,1,\cdots N-1 \label{eqn3.4}\\
\frac{\partial S}{\partial p_N}&= H\left(\phi-i_{N-1}\right) \label{eqn3.5}
\end{align}
where $\delta(\cdot)$ denotes the Dirac delta function arising from the derivative of Heaviside. The inverse problem of traveltime tomography reduces to recovering the level-set function $\phi(\mathbf{x})$ and the parameters $p_n(\mathbf{x})$.

 \subsection{Adjoint state methods}
We use an $L^2$-norm misfit function to evaluate the discrepancy between \replyone{the} simulated and \replyone{the measured} traveltime data,
\begin{equation}\label{eqn3.6}
E=\frac{1}{2}\sum_{j=1}^J\int_{\Gamma}\left|T_j-T_j^*\right|^2\,\mathrm{d}s\,,
\end{equation}
where $T$ depends on $S$ and so the level-set function $\phi$ and parameters $p_n$.
The tomography problem is solved by considering an optimization problem,
\begin{equation}\label{eqn3.7}
(\phi(\mathbf{x}),p_0(\mathbf{x}),\cdots,p_N(\mathbf{x}))=\arg\min\, E\big(T\big(S((\phi(\mathbf{x}),p_0(\mathbf{x}),\cdots,p_N(\mathbf{x}))\big)\big)\,.
\end{equation}
We propose the gradient descent algorithm to solve (\ref{eqn3.7}), and like our previous works, we use an adjoint state method to calculate the Fr\'echet derivative $\frac{\partial E}{\partial S}$.
\replytwo{This kind of technique is also applied in the field of optimal design~\cite{allgrebihboggod23}.}
\begin{proposition} 
The Fr\'echet derivative $\frac{\partial E}{\partial S}$ is given by
\begin{equation}\label{eqn3.8}
\frac{\partial E}{\partial S}=\sum_{j=1}^J \lambda_j\, S\,,
\end{equation}
where $\lambda_j$ is the adjoint state variable satisfying the following adjoint state equation,
\begin{equation}\label{eqn3.9}
    \left\{
\begin{array}{rcl} \displaystyle
-\nabla\cdot(\lambda_j\nabla T_j)=0 &,& \mathrm{in}\ \Omega \\
\lambda_j\frac{\partial T_j}{\partial \mathbf{n}}=T_j-T_j^* &,& \mathrm{on}\ \Gamma \\
\lambda_j=0 &,& \mathrm{on}\ \partial\Omega\setminus\Gamma
\end{array}
\right.\,,\quad j=1,\cdots,J\,.
\end{equation}
\end{proposition}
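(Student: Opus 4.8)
The plan is to follow the standard adjoint-state derivation for PDE-constrained optimization, treating the eikonal equations \eqref{eqn3.2} as equality constraints and the variables $\lambda_j$ as the associated Lagrange multipliers. First I would compute the first variation of the misfit \eqref{eqn3.6}: since $E$ depends on $S$ only through the traveltimes $T_j$, a perturbation $S\to S+\delta S$ induces $T_j\to T_j+\delta T_j$, and directly
\[
\delta E=\sum_{j=1}^J\int_\Gamma (T_j-T_j^*)\,\delta T_j\,\mathrm{d}s .
\]
The aim is to rewrite this boundary integral, which still contains the unknown sensitivity $\delta T_j$, purely in terms of $\delta S$.

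Next I would linearize the eikonal constraint. Perturbing $|\nabla T_j|=S$ gives, to first order,
\[
\frac{\nabla T_j}{|\nabla T_j|}\cdot\nabla\delta T_j=\delta S,\qquad\text{i.e.}\qquad \nabla T_j\cdot\nabla\delta T_j=S\,\delta S,
\]
where I used $|\nabla T_j|=S$; the fixed source condition $T_j(\mathbf{x}_{s,j})=0$ also forces $\delta T_j(\mathbf{x}_{s,j})=0$. I then introduce the multiplier $\lambda_j$, multiply the linearized equation by $\lambda_j$, integrate over $\Omega$, and integrate by parts to transfer the derivative off $\delta T_j$:
\[
\int_\Omega\lambda_j S\,\delta S\,\mathrm{d}\mathbf{x}=\int_{\partial\Omega}\lambda_j\frac{\partial T_j}{\partial\mathbf{n}}\,\delta T_j\,\mathrm{d}s-\int_\Omega\nabla\cdot(\lambda_j\nabla T_j)\,\delta T_j\,\mathrm{d}\mathbf{x}.
\]

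The adjoint equation \eqref{eqn3.9} is then chosen precisely to eliminate every remaining term containing the unknown $\delta T_j$ and to reproduce the boundary integral appearing in $\delta E$. Imposing $-\nabla\cdot(\lambda_j\nabla T_j)=0$ in $\Omega$ annihilates the volume integral; imposing $\lambda_j=0$ on $\partial\Omega\setminus\Gamma$ removes the boundary contribution there; and imposing $\lambda_j\,\partial T_j/\partial\mathbf{n}=T_j-T_j^*$ on $\Gamma$ makes the surviving boundary term equal $\int_\Gamma(T_j-T_j^*)\,\delta T_j\,\mathrm{d}s$. Substituting back and summing over $j$ gives $\delta E=\sum_j\int_\Omega\lambda_j S\,\delta S\,\mathrm{d}\mathbf{x}$, and \eqref{eqn3.8} follows from the definition of the Fr\'echet derivative.

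The main obstacle is the low regularity of the forward solution. Since $T_j$ is only the (extended) viscosity solution of \eqref{eqn3.2}, $\nabla T_j$ may be discontinuous across kinks of the traveltime field and is singular at the source $\mathbf{x}_{s,j}$, while $S$ itself is only piecewise continuous; consequently both the linearization and the integration by parts must be justified in a weak sense rather than pointwise. I would argue that $\delta T_j$ vanishes at the source, that the kink set of $T_j$ has measure zero so that the divergence-form identity holds distributionally, and that $\lambda_j$ is to be understood as the weak solution of the degenerate transport equation \eqref{eqn3.9}, whose characteristics are the rays $\nabla T_j/S$, following the viscosity/weak-solution framework of the earlier works cited in the excerpt. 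Checking that the boundary flux $\lambda_j\,\partial T_j/\partial\mathbf{n}$ remains well defined on $\Gamma$ in spite of these singularities is the most delicate step.
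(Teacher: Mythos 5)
Your derivation is correct and is essentially the same Lagrange-multiplier/adjoint-state computation that the paper relies on: the paper itself gives no proof but defers to \cite{leuqia06,lileuqia14}, where the misfit variation is linearized through the eikonal constraint ($\nabla T_j\cdot\nabla\delta T_j = S\,\delta S$), integrated by parts, and the adjoint equation (\ref{eqn3.9}) is chosen exactly to cancel the $\delta T_j$ terms, yielding (\ref{eqn3.8}). Your closing remarks on the viscosity-solution regularity and the weak interpretation of $\lambda_j$ are a sensible acknowledgment of the formal nature of the argument, consistent with how those references treat it.
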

We refer readers to our previous works~\cite{leuqia06,lileuqia14} for the derivations of~\eqref{eqn3.8} and~\eqref{eqn3.9}.
Moreover, we mention that the eikonal equations~\eqref{eqn3.2} and the adjoint state equations~\eqref{eqn3.9} are solved by the fast sweeping method with a Godunov upwind scheme~\cite{zha05,leuqia06}.

Combining equation (\ref{eqn3.8}) and equations (\ref{eqn3.3})-(\ref{eqn3.5}), we have
\begin{align}
    \frac{\partial E}{\partial\phi}&= \left(-\sum_{n=0}^{N-1} p_n\, \delta(\phi-i_n) + p_N\, \delta\left(\phi-i_{N-1}\right)\right) \cdot \sum_{j=1}^J \lambda_j\, S \label{eqn3.10}\\
    \frac{\partial E}{\partial p_n}&=\left(1-H(\phi-i_n)\right) \cdot \sum_{j=1}^J \lambda_j\, S, \qquad n=0,1,\cdots N-1 \label{eqn3.11}\\
    \frac{\partial E}{\partial p_N}&= H\left(\phi-i_{N-1}\right) \cdot \sum_{j=1}^J \lambda_j\, S \label{eqn3.12}
\end{align}
Numerically, we use a smooth version of the Heaviside function,
\begin{equation}\label{eqn3.13}
H_{\tau}(\phi)=\frac{1}{2}\left(\tanh\frac{\phi}{\tau} +1\right)\,,
\end{equation}
and the numerical Delta function is
\begin{equation}
\delta_{\tau}(\phi)=H_{\tau}'(\phi)=\frac{1}{2\tau\cosh^2\frac{\phi}{\tau}}\,.
\end{equation}
Then the level-set function $\phi(\mathbf{x})$ and the parameters $p_n(\mathbf{x})$ are updated according to their gradient descent directions.


\subsection{Regularization}

\subsubsection{Reinitialization of $\phi$}
Firstly, the multilayer level-set reinitialization is imposed on $\phi$ during its \replyone{update}.
As discussed in Section \ref{subsec2.3}, it helps to maintain the signed-distance property of $\phi$ near its $i_n$-level-sets.
This is a regularization on $\phi$ making it well-behaved around the \replyone{interfaces}.
In our multilayer level-set formulation, the reinitialization is performed according to equations (\ref{reinitialization})-(\ref{eqn2.12}).
Ideally, the reinitialization does not change the $i_n$-level-set of $\phi$, $\forall n=0,\cdots,N-1$, and so it will not affect the Fr\'echet derivatives of $E$ as shown in equations (\ref{eqn3.10})-(\ref{eqn3.12}).

\subsubsection{Penalization on $\phi$} \label{subsubsec_Penal}
A further penalization can be imposed on $\phi$ to regularize the shape of interfaces.
In \replyone{classical level-set methods}, the $0$-level-set represents the interface, and its arc-length or area can be evaluated by $\int_\Omega \delta(\phi) |\nabla\phi|\mathrm{d}\mathbf{x}$~\cite{zhachamerosh96}, where $\delta(\cdot)$ denotes the Dirac delta function.
In the multilayer level-set method, the interfaces are represented by $i_n$-level-sets as shown in equation (\ref{eqn2.3}).
As a result, we consider the following term,
\begin{equation}\label{penal1}
E_r(\phi)=\int_\Omega \sum_{n=0}^{N-1}\delta(\phi-i_n) |\nabla\phi|\mathrm{d}\mathbf{x}\,.
\end{equation}
Here, $E_r(\phi)$ evaluates the \replyone{sum of the lengths or areas} of all interfaces represented by the multilayer level-set function.
Penalizing $E_r(\phi)$ has the effect of shrinking the interfaces, and thus avoiding forming sharp corners in the shape of interfaces.
We add the penalization term $E_r(\phi)$ to the data misfit function $E$, and form the total-energy functional,
\begin{equation}\label{penal2}
E_{\mathrm{total}}=E+\gamma_\phi\, E_r(\phi)\,,
\end{equation}
where $E$ is defined in equation (\ref{eqn3.6}), and $\gamma_\phi>0$ is a parameter that controls the amount of penalization applied.
Using calculus of variations, the Fr\'echet derivative of $E_r(\phi)$ is,
\begin{equation}\label{penal3}
\frac{\partial E_r}{\partial\phi}=-\sum_{n=0}^{N-1}\delta(\phi-i_n)\, \nabla\cdot\left(\frac{\nabla\phi}{|\nabla\phi|} \right)\,.
\end{equation}
The Dirac delta function can be approximated as~\cite{zhachamerosh96}
\begin{equation}\label{penal4}
\delta(\phi-i_n)\doteq\chi(T_{\hat{\tau}, i_n})|\nabla\phi| \,,
\end{equation}
where $\chi(\cdot)$ is the indicator function,
\begin{equation}\label{penal5}
\chi(D)(\mathbf{x})=\left\{
\begin{array}{lcr}
1 &, & \mathbf{x}\in D\\
0 &, & \mathbf{x}\notin D
\end{array}
\right.\,,
\end{equation}
and $T_{\hat{\tau}, i_n}$ denotes a small tube containing the $i_n$-level-set,
\begin{equation}\label{penal6}
T_{\hat{\tau}, i_n}=\{\mathbf{x}:\ |\phi(\mathbf{x})-i_n|<\hat{\tau} \}\,.
\end{equation}
Substituting (\ref{penal4}) into (\ref{penal3}), we have
\begin{equation}\label{penal7}
\frac{\partial E_r}{\partial\phi}\doteq -\chi(T_{\hat{\tau}})\,|\nabla\phi|
\, \nabla\cdot\left(\frac{\nabla\phi}{|\nabla\phi|} \right)\,,
\end{equation}
where $T_{\hat{\tau}}$ denotes the union of all the tubes containing the $i_n$-level-sets,
\begin{equation}\label{penal8}
T_{\hat{\tau}}= \bigcup_{n=0}^{N-1} \{\mathbf{x}:\ |\phi(\mathbf{x})-i_n|<\hat{\tau} \}\,.
\end{equation}
A drawback of formula (\ref{penal7}) is that it consists of a complicated curvature term $\nabla\cdot\left(\frac{\nabla\phi}{|\nabla\phi|}\right)$.
Since the multilayer level-set reinitialization is performed on $\phi$, one can expect that $|\nabla\phi|\doteq 1$ near the $i_n$-level-sets.
Then formula (\ref{penal7}) is simplified to be
\begin{equation}\label{penal9}
\frac{\partial E_r}{\partial\phi}\doteq -\chi(T_{\hat{\tau}})\,\Delta\phi\,,
\end{equation}
And
\begin{equation}\label{penal10}
\frac{\partial E_{\mathrm{total}}}{\partial\phi}= \frac{\partial E}{\partial\phi}-\gamma_\phi \cdot \chi(T_{\hat{\tau}})\,\Delta\phi\,.
\end{equation}
The added penalization effectively introduces a viscosity term near the $i_n$-level-sets. 

We mention that one should be very careful when using this penalization.
Firstly, according to equation (\ref{eqn2.5}), $\phi$ can be discontinuous away from its $i_n$-level-sets, where computing $\Delta\phi$ may introduce blowup.
Therefore, the tube width $\hat{\tau}$ should be taken small; for example, we take $\hat{\tau}=3\tau$ in computations, where $\tau$ is the parameter of our numerical Heaviside function as shown in equation (\ref{eqn3.13}).
Secondly, one should keep in mind that the penalization has the effect of shrinking the interfaces. 
To prevent \replyone{overly smoothing the interfaces}, this penalization is used only when necessary.

\subsubsection{Regularization of $p_n$} \label{subsubregul3}
To update $p_n$ smoothly, $\forall n=0,\cdots,N$, we \replyone{apply} a Sobolev regularization on the perturbations of $p_n$.
\replytwo{Similar techniques can be found in~\cite{bur03,degou06,lileu13}; see also~\cite[p. 125-126]{mohpir10}.}
Denoting $\tilde{p}_n=-\frac{\partial E}{\partial p_n}$, we solve the following equation to regularize $\tilde{p}_n$,
\begin{equation}\label{eqn3.15}
\left\{
\begin{array}{l} \displaystyle
(I-\gamma\Delta)\tilde{P}_n(\mathbf{x})=\tilde{p}_n(\mathbf{x})\,,\ \ \mathbf{x}\in\Omega \\
\displaystyle \frac{\partial\tilde{P}_n}{\partial\mathbf{n}}\mid_{\partial\Omega}=0
\end{array}
\right.\,,
\end{equation}
where $\mathbf{n}$ denotes the outer normal direction of $\partial\Omega$, and $\gamma>0$ is a parameter controlling the amount of regularization.
Then $\tilde{P}_n$ is used to replace $\tilde{p}_n$ in the updating of $p_n$.
To illustrate the effect of (\ref{eqn3.15}), we include the following proposition, which corrects some minor issues of Theorem 3.2 in~\cite{liqia21}. \replytwo{While its proof is straightforward for experts in PDE analysis, we include it here to ensure the paper remains self-contained for readers from broader computational and geophysical research communities.}

\begin{proposition}
Let $\Omega$ be a bounded domain with boundary of class $C^{1,1}$ (or alternatively, a bounded convex domain), and $\tilde{p}_n\in L^2(\Omega)$. The functional 
\[
J(\tilde{P})=\|\tilde{P}-\tilde{p}_n\|^2_{L^2(\Omega)}+\gamma\|\nabla\tilde{P}\|^2_{L^2(\Omega)},\ \ (\gamma>0)
\]
has a unique minimizer in $H^1(\Omega)$; this minimizer, denoted $\tilde{P}_n$, satisfies equation (\ref{eqn3.15}).
\end{proposition}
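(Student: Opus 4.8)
The plan is to treat $J$ as a coercive, strictly convex quadratic functional on the Hilbert space $H^1(\Omega)$, establish existence and uniqueness of its minimizer by an abstract variational argument, derive the weak Euler--Lagrange equation, and only then promote that weak identity to the strong form (\ref{eqn3.15}) via elliptic regularity. I would keep the two conclusions of the proposition separate: the minimization statement is soft and needs no boundary regularity, whereas the claim that the minimizer actually satisfies (\ref{eqn3.15}) is precisely where the $C^{1,1}$ (or convexity) hypothesis enters.

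For existence and uniqueness, I would introduce the symmetric bilinear form
\[
a(u,v)=\int_\Omega uv\,\mathrm{d}\mathbf{x}+\gamma\int_\Omega\nabla u\cdot\nabla v\,\mathrm{d}\mathbf{x}
\]
and the linear functional $L(v)=\int_\Omega\tilde{p}_n v\,\mathrm{d}\mathbf{x}$, so that $J(\tilde{P})=a(\tilde{P},\tilde{P})-2L(\tilde{P})+\|\tilde{p}_n\|^2_{L^2(\Omega)}$. Since $a(u,u)=\|u\|^2_{L^2(\Omega)}+\gamma\|\nabla u\|^2_{L^2(\Omega)}\ge\min(1,\gamma)\,\|u\|^2_{H^1(\Omega)}$, the form $a$ is coercive on $H^1(\Omega)$, and it is plainly bounded; $L$ is bounded because $\tilde{p}_n\in L^2(\Omega)$. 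The Lax--Milgram theorem (or, since $a$ is symmetric, the Riesz representation theorem) then produces a unique $\tilde{P}_n\in H^1(\Omega)$ with $a(\tilde{P}_n,v)=L(v)$ for all $v\in H^1(\Omega)$, and a completion-of-the-square argument identifies this $\tilde{P}_n$ as the unique minimizer of $J$. Alternatively, one may run the direct method (coercivity plus weak lower semicontinuity of the convex functional $J$), with strict convexity giving uniqueness.

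The variational identity $a(\tilde{P}_n,v)=L(v)$ reads, for every $v\in H^1(\Omega)$,
\[
\int_\Omega(\tilde{P}_n-\tilde{p}_n)\,v\,\mathrm{d}\mathbf{x}+\gamma\int_\Omega\nabla\tilde{P}_n\cdot\nabla v\,\mathrm{d}\mathbf{x}=0,
\]
which is exactly the weak formulation of the Neumann problem (\ref{eqn3.15}). To recover the strong form I would first test against $v\in C_c^\infty(\Omega)$ to obtain $\tilde{P}_n-\gamma\Delta\tilde{P}_n=\tilde{p}_n$ in the distributional sense, after which the surviving boundary term forces $\partial\tilde{P}_n/\partial\mathbf{n}=0$ on $\partial\Omega$.

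The step that makes this rigorous, and the main obstacle, is the $H^2(\Omega)$-regularity of $\tilde{P}_n$ up to the boundary: only once $\tilde{P}_n\in H^2(\Omega)$ are $\Delta\tilde{P}_n$ and the normal trace $\partial\tilde{P}_n/\partial\mathbf{n}$ well defined (in $L^2(\Omega)$ and $H^{1/2}(\partial\Omega)$, respectively) and Green's formula applicable, so that the weak equation becomes the pointwise system (\ref{eqn3.15}). This global regularity can fail on general Lipschitz domains (for instance at reentrant corners), but it holds for the Neumann problem with $L^2$ data when $\partial\Omega$ is of class $C^{1,1}$ or when $\Omega$ is convex, by the classical elliptic regularity theory for nonsmooth and convex domains (Grisvard). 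This is precisely the point at which the hypotheses of the proposition are indispensable, and it is what sharpens the earlier Theorem~3.2 of \cite{liqia21}, whose minor gap was the omission of the domain regularity needed to pass from the $H^1$ minimizer to a genuine solution of (\ref{eqn3.15}).
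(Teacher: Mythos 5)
Your proposal is correct and takes essentially the same route as the paper: the identical bilinear form $a(u,v)=\int_\Omega uv\,\mathrm{d}\mathbf{x}+\gamma\int_\Omega\nabla u\cdot\nabla v\,\mathrm{d}\mathbf{x}$ with Lax--Milgram (coercivity constant $\min\{1,\gamma\}$) yielding the unique minimizer, followed by Grisvard's elliptic regularity on $C^{1,1}$ or convex domains to get $\tilde{P}_n\in H^2(\Omega)$ and then integration by parts to reach the strong form (\ref{eqn3.15}). Your extra details---the completion-of-the-square identification of the Lax--Milgram solution as the minimizer, and recovering the Neumann condition by first testing with $v\in C_c^\infty(\Omega)$---simply make explicit steps the paper leaves implicit.
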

\begin{proof}
\setstretch{1.5}
(1) $J(\tilde{P})$ has a unique minimizer in $H^1(\Omega)$.

Since $J(\tilde{P})$ is convex and G\^ateaux differentiable, in $H^1(\Omega)$ the minimizer of $J$ is equivalent to the solution of its Euler-Lagrange equation:
\begin{equation*}
\left.\frac{\mathrm{d}}{\mathrm{d}\epsilon} J(\tilde{P}+\epsilon v)\right|_{\epsilon=0}=0\ \iff  \ \int_\Omega\left(\tilde{P}-\tilde{p}_n\right)v\mathrm{d}\mathbf{x}+\gamma\int_{\Omega}\nabla\tilde{P}\cdot\nabla v\mathrm{d}\mathbf{x}=0\,,\ \forall v\in H^1(\Omega)\,.
\end{equation*}
Define the bilinear form $a: H^1(\Omega)\times H^1(\Omega)\to\mathbb{R}$, and the linear functional $L: H^1(\Omega)\to\mathbb{R}$ as follows,
\begin{equation*}
a(u,v)=\int_\Omega uv\mathrm{d}\mathbf{x}+\gamma\int_\Omega \nabla u\cdot\nabla v\mathrm{d}\mathbf{x}\,,\quad L(v)=\int_\Omega\tilde{p}_n v\mathrm{d}\mathbf{x}\,.
\end{equation*}
It is direct to verify that $a(u,v)$ is bounded and  coercive, and $L(v)$ is bounded; in fact, $a(u,v)\le(1+\gamma)\|u\|_{H^1}\|v\|_{H^1}$, $a(u,u)\ge\min\{1,\gamma\}\|u\|_{H^1}^2$, and $L(v)\le\|\tilde{p}_n\|_{L^2}\|v\|_{H^1}$. Then by the Lax-Milgram theorem, 
\begin{equation} \label{proof_eqn1}
\exists\mid u\in H^1(\Omega)\,,\quad \mathrm{such\ that}\ \ a(u,v)=L(v)\,,\ \forall v\in H^1(\Omega)\,.
\end{equation}
This $u$ is the unique minimizer of $J$ in $H^1(\Omega)$, denoted $\tilde{P}_n$.

(2) The minimizer $\tilde{P}_n\in H^2(\Omega)$, and satisfies equation (\ref{eqn3.15}).

By (\ref{proof_eqn1}), $\tilde{P}_n\in H^1(\Omega)$ satisfies
\begin{equation} \label{proof_eqn2}
\int_\Omega\tilde{P}_n v\mathrm{d}\mathbf{x}+\gamma\int_{\Omega}\nabla\tilde{P}_n\cdot\nabla v\mathrm{d}\mathbf{x}=\int_\Omega\tilde{p}_n v\mathrm{d}\mathbf{x}\,,\ \forall v\in H^1(\Omega)\,;
\end{equation}
it is a weak solution of the elliptic equation (\ref{eqn3.15}). Since $\Omega$ is a bounded domain with boundary of class $C^{1,1}$ (or a bounded convex domain), elliptic regularity theories, e.g. Theorem 2.2.2.5 and Theorem 3.2.1.3 in~\cite{grisvard85}, imply that $\tilde{P}_n\in H^2(\Omega)$. Then integration by parts leads to equation (\ref{eqn3.15}).
\end{proof}

Moreover, we mention that $\tilde{P}_n$ is a gradient descent direction of $p_n$.
\begin{align}
    E(p_n+\epsilon\tilde{P}_n)-E(p_n)&\doteq\int_{\Omega}\frac{\partial E}{\partial p_n}\,\epsilon\tilde{P}_n\mathrm{d}\mathbf{x}=-\epsilon\int_{\Omega}\tilde{p}_n\tilde{P}_n\mathrm{d}\mathbf{x}=-\epsilon\int_{\Omega}(I-\gamma\Delta)\tilde{P}_n\cdot\tilde{P}_n\mathrm{d}\mathbf{x} \nonumber\\
    &=-\epsilon\int_{\Omega}\left|\tilde{P}_n\right|^2\,\mathrm{d}\mathbf{x}+\epsilon\gamma\left(\int_\Omega
-\left|\nabla\tilde{P}_n\right|^2\mathrm{d}\mathbf{x}+ \int_{\partial\Omega}\frac{\partial\tilde{P}_n}{\partial \mathbf{n}}\tilde{P}_n\,\mathrm{d}s\right) \nonumber\\
    &=-\epsilon\int_{\Omega}\left(\left|\tilde{P}_n\right|^2+\gamma\left|\nabla\tilde{P}_n\right|^2\right)\mathrm{d}\mathbf{x} \le 0\,. \label{eqn3.16}
\end{align}

Now, we summarize the inversion algorithm as follows.

\begin{algorithm}[!htb]\sffamily
\caption{\sffamily \hspace{2pt} Multilayer level-set algorithm for eikonal-based traveltime tomography}\label{alg:traveltime_tomography}
\begin{algorithmic}[1]
\State Prescribe the number $N$ and the arithmetic sequence $\{i_n\}_{n=0}^{N-1}$.
\State Initialize the multilayer level-set function $\phi$ and the parameters $p_n$, $n=0,\cdots,N$.
\State Construct the slowness function $S(\mathbf{x})$ according to the multilayer level-set formulation (\ref{eqn2.7}).
\State Compute the simulated traveltime $T_j$, $j=1,\cdots,J$, by solving the eikonal equation (\ref{eqn3.2}), and evaluate $E$ according to (\ref{eqn3.6}).
\State Solve the adjoint state variable $\lambda_j$ from equation (\ref{eqn3.9}), $j=1,\cdots,J$.
\State Evaluate the Fr\'echet derivatives according to equations (\ref{eqn3.10})-(\ref{eqn3.12}) and equation (\ref{penal9}); set $\tilde{\phi}=-\partial E/\partial\phi+\gamma_\phi \cdot \chi(T_{\hat{\tau}})\,\Delta\phi$ and $\tilde{p}_n=-\partial E/\partial\tilde{p}_n$, $n=0,\cdots,N$.
\State Regularize $\tilde{p}_n$ according to equation (\ref{eqn3.15}), use $\tilde{P}_n$ to replace $\tilde{p}_n$.
\State Update the model parameters: $\phi^{\mathrm{new}}=\phi+\epsilon\tilde{\phi}$, $p_n^{\mathrm{new}}=p_n+\epsilon\tilde{P}_n$, $n=0,\cdots,N$.
\State Perform the multilayer level-set reinitialization for $\phi$ according to equations (\ref{reinitialization})-(\ref{eqn2.12}).
\State Go back to step 3 until the stopping criterion is achieved: $E<\epsilon_{stop}$, or the number of iterations exceeds the specified value.
\end{algorithmic}
\end{algorithm}

\subsection{An error measure considering illumination} \label{subsec_illu}
We propose a novel measure to evaluate the performance of the inversion algorithm for first-arrival traveltime tomography.
A direct index is the error between \replyone{the} recovered slowness $S(\mathbf{x})$ and \replyone{the} true value $S_{\mathrm{true}}(\mathbf{x})$, e.g. $|S-S_{\mathrm{true}}|$.
(In \replyone{all} test examples, the true values are known.) 
However, in first-arrival traveltime tomography, the rays passing through the detection area are not uniformly distributed.
For instance, rays bend away from regions with large slowness, and information from such low-illumination regions is poorly detected by boundary measurements; when evaluating the performance of \replyone{an} inversion algorithm, one should allow larger mistakes in the low-illumination regions, which is the nature of the first-arrival traveltime tomography.

Similar to what we did in~\cite{lileuqia14}, we propose a labeling function, $F(\mathbf{x};\mathbf{x}_s)$, to indicate the illumination of the location $\mathbf{x}\in\Omega$ by rays emanating from the point source $\mathbf{x}_s$.
Specifically, when a first-arrival ray emitted from $\mathbf{x}_s$ is received by a boundary measurement at $\Gamma\subset\partial\Omega$, we set $F(\mathbf{x};\mathbf{x}_s)\equiv 1$ along the ray; denoting  $\{\mathbf{x}: \mathbf{x}= \mathbf{x}(\xi) \}$ the arc-length parametrization of the ray, it holds that
\begin{equation}\label{eqn3.17}
\frac{\mathrm{d}F\left(\mathbf{x}(\xi);\mathbf{x}_s\right)}{\mathrm{d}\xi}=0\,,
\end{equation}
where $\frac{\mathrm{d}}{\mathrm{d}\xi}$ denotes the total derivative along $\mathbf{x}= \mathbf{x}(\xi) $. Equation (\ref{eqn3.17}) implies that,
\begin{equation}\label{eqn3.18}
\nabla F(\mathbf{x};\mathbf{x}_s)\cdot\frac{\mathrm{d}\mathbf{x}(\xi)}{\mathrm{d}\xi}=0\,.
\end{equation}
Considering the characteristics of the eikonal equation, we have the ray direction $\frac{\mathrm{d}\mathbf{x}(\xi)}{\mathrm{d}\xi}$ as follows,
\begin{equation}\label{eqn3.19}
\frac{\mathrm{d}\mathbf{x}(\xi)}{\mathrm{d}\xi}=\frac{\nabla T(\mathbf{x})}{S(\mathbf{x})}\,,
\end{equation}
where $T(\mathbf{x})=T(\mathbf{x};\mathbf{x}_s)$ is the first-arrival traveltime with the point source $\mathbf{x}_s$.
Substituting (\ref{eqn3.19}) into (\ref{eqn3.18}), the labeling function $F(\mathbf{x};\mathbf{x}_s)$ satisfies the following equation,
\begin{equation}\label{eqn3.20}
\nabla F(\mathbf{x};\mathbf{x}_s)\cdot\nabla T(\mathbf{x})=0\,.
\end{equation}
In practice, $F(\mathbf{x};\mathbf{x}_s)$ is computed by propagating its value at the measurement boundary $\Gamma\subset\partial\Omega$ back into the domain $\Omega$,
\begin{equation}\label{eqn3.21}
\left\{
\begin{array}{rcl}
-\nabla F(\mathbf{x};\mathbf{x}_s)\cdot\nabla T(\mathbf{x})=0 &,& \mathbf{x}\in\Omega \\
F(\mathbf{x};\mathbf{x}_s)=1 &,&  \mathbf{x}\in\Gamma \\
F(\mathbf{x};\mathbf{x}_s)=0 &,&  \mathbf{x}\in\partial\Omega\setminus\Gamma
\end{array}
\right.\,.
\end{equation}

\replytwo{Equation (\ref{eqn3.21}) represents an Eulerian formulation of (\ref{eqn3.17}), asserting that the labeling function $F$ remains constant along each ray. Because the ray direction $\nabla T$ can be discontinuous, it may introduce shocks into the advection equation. For instance, for a piecewise Lipschitz continuous slowness $S(\mathbf{x})$, the eikonal equation admits an extended viscosity solution that is Lipschitz continuous \cite{ost20, lileu13}; however, Lipschitz continuity does not guarantee $C^1$ regularity everywhere.  In the context of first-arrival traveltimes, discontinuities in $\nabla T$ arise at kinks where multiple rays converge with identical minimal traveltimes. These kinks manifest as shocks in the advection equation (\ref{eqn3.21}). To resolve this, we employ an upwind scheme that respects causality: information flows exclusively into the shock, preventing singularities from propagating outward. Specifically, we solve (\ref{eqn3.21}) using the fast sweeping method with a Godunov upwind discretization \cite{lileuqia14}, with $F(\mathbf{x};\mathbf{x}_s)$ initialized to zero throughout $\Omega$.}

For multiple point sources $\{\mathbf{x}_{s,j}\}_{j=1}^J$, denoting $T_j(\mathbf{x}):=T(\mathbf{x};\mathbf{x}_{s,j})$ the first-arrival traveltime corresponding to the point source $\mathbf{x}_{s,j}$, we introduce a set of labeling functions, $F_j(\mathbf{x}):=F(\mathbf{x};\mathbf{x}_{s,j})$, $j=1,\cdots,J$, according to the advection equation (\ref{eqn3.21}), i.e.,
\begin{equation}\label{eqn3.22}
\left\{
\begin{array}{rcl}
-\nabla F_j(\mathbf{x})\cdot\nabla T_j(\mathbf{x})=0 &,& \mathbf{x}\in\Omega \\
F_j(\mathbf{x})=1 &,&  \mathbf{x}\in\Gamma \\
F_j(\mathbf{x})=0 &,&  \mathbf{x}\in\partial\Omega\setminus\Gamma
\end{array}
\right.\,.
\end{equation}
Then the total illumination function is defined as follows,
\begin{equation}\label{eqn3.23}
F(\mathbf{x})=\frac{1}{J}\sum_{j=1}^J F_j(\mathbf{x})\,.
\end{equation}
In (\ref{eqn3.23}), we compute the total illumination at $\mathbf{x}$ by summing contributions from all point sources, and we normalize it to make $F(\mathbf{x})\in[0,1]$; an $F(\mathbf{x})$ close to $1$ means the location $\mathbf{x}$ is of high illumination. 

In test examples where the true model $S_{\mathrm{true}}$ is known, the total illumination $F(\mathbf{x})$ should be computed using the first-arrival traveltime $T(\mathbf{x})$ calculated from $S_{\mathrm{true}}$.
Lastly, we define the error considering illumination,
\begin{equation}\label{eqn3.24}
e_F(\mathbf{x}):=F(\mathbf{x})\,\big|S(\mathbf{x})-S_{\mathrm{true}}(\mathbf{x})\big|\,,
\end{equation}
where $F(\mathbf{x})$ is the total illumination function, $S(\mathbf{x})$ is the recovered slowness, and $S_{\mathrm{true}}(\mathbf{x})$ denotes its true value.
To summarize, $e_F(\mathbf{x})$ tolerates large mistakes $|S(\mathbf{x})-S_{\mathrm{true}}(\mathbf{x})|$ in regions with low illumination.

\section{Numerical results}
The following test examples utilize a computational domain $\Omega=(-1,1)\times(0,2)$. $\overline{\Omega}$ is discretized into a $129\times129$ grid, yielding a mesh size of $h=2/128$. We consider $J=14$ shots,  where the point sources $\{\mathbf{x}_{s,j}\}_{j=1}^{14}$ are located at $(-0.9,\ 0.1:0.3:1.9)$ and $(0.9,\ 0.1:0.3:1.9)$, respectively.
The traveltime measurements are distributed on the mesh grid along the full boundary of $\Omega$, i.e., $\Gamma=\partial\Omega$.

We employ the multilayer level-set formulation to represent piecewise continuous slowness structures with multiple phases, where the interfaces are depicted by the $i_n$-level-sets of $\phi(\mathbf{x})$; we take $i_n=\frac{n}{2}$ in the test examples, e.g., $i_0=0$ and $i_1=0.5$.
The parameter $\tau$ in the numerical Heaviside and Delta functions is taken as $\tau=10^{-2}$ in the computation.


\subsection{Recovering domains/interfaces only}
Firstly, we provide examples where the slowness values $S_n(\mathbf{x})$, $n=0,\cdots,N$, are given, and we recover the domains $D_n$, $n=0,\cdots,N-1$.
In the multilayer level-set formulation (\ref{eqn2.7}), the parameters $p_n\replytwo{(\mathbf{x})}$ are given by the values of $S_n\replytwo{(\mathbf{x})}$ according to equation (\ref{eqn2.9}), and we only need to recover the level-set function $\phi(\mathbf{x})$. For simplicity, we consider examples with $N=2$.

\subsubsection{Example 1}
Figure \ref{Fig_EX1_1}\,(a) shows the true model of slowness.
It is a layered structure of considerable interest in geophysical exploration, where the slowness values are given as $S_0(\mathbf{x})=0.5$, $S_1(\mathbf{x})=1.0$ and $S_2(\mathbf{x})=2.0$. We consider an initial guess of the multilayer level-set function as follows,
\begin{equation}\label{eqn4.1}
\phi_0(x,z)=\sqrt{x^2+(z-1)^2}-0.3\,,
\end{equation}
which is a signed-distance function to the circle $\sqrt{x^2+(z-1)^2}=0.3$; its $0$-level-set is at $\sqrt{x^2+(z-1)^2}=0.3$, and its $0.5$-level-set is at $\sqrt{x^2+(z-1)^2}=0.8$.
The parameters $p_n$ in the multilayer level-set formulation (\ref{eqn2.7}) are given by the values of $S_n$ according to equation (\ref{eqn2.9}).
Figure \ref{Fig_EX1to3_initial} plots the initial guess, where Figure \ref{Fig_EX1to3_initial}\,(a) shows the initial level-set function, and Figure \ref{Fig_EX1to3_initial}\,(b) shows the initial structure of slowness.

The multilayer level-set inversion algorithm is then performed with an iteration step-size of $\epsilon=2\times10^{-3}$.
The penalization on $\phi$ is not employed in this example, i.e. $\gamma_\phi=0$.
Figure \ref{Fig_EX1_1}\,(b) shows the recovered level-set function after 5000 iterations; Figure \ref{Fig_EX1_1}\,(c) plots the solution of slowness; Figure \ref{Fig_EX1_1}\,(d) plots the discrepancy between the recovered slowness and its true model.
It shows that the mistake primarily appears at the interface regions with discontinuities, while the solution almost perfectly recovers the true model.

\begin{figure}
\centering
\includegraphics[width=0.99\textwidth,angle=0]{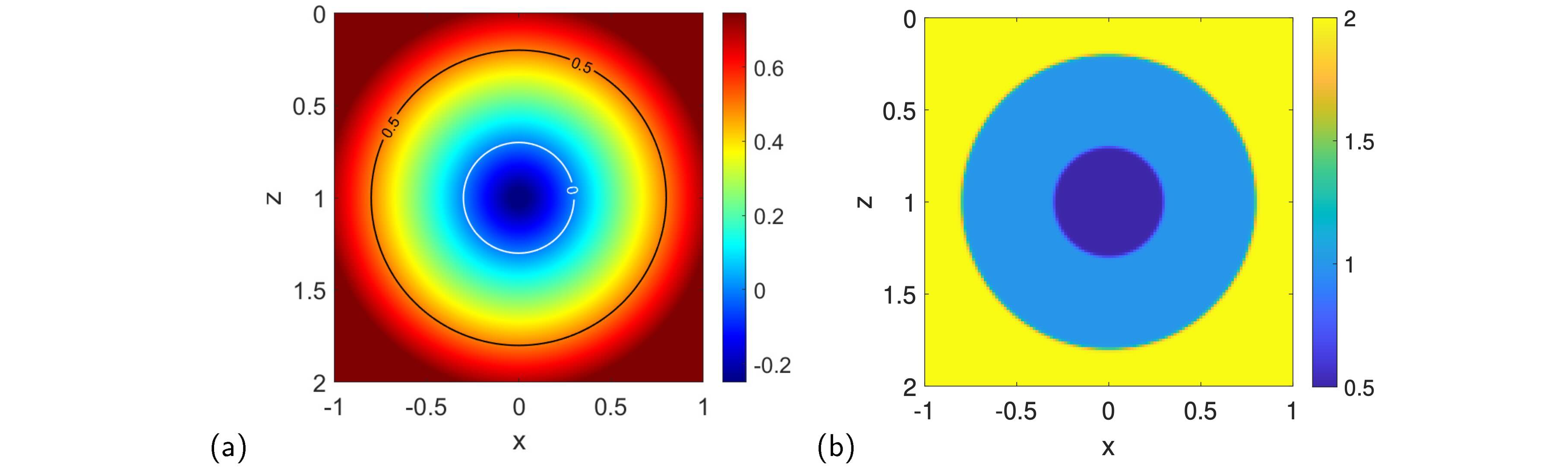}
\caption{Initial guess for Examples 1, 2 and 3. (a) Initial guess of the multilayer level-set function; (b) initial structure of slowness.}
\label{Fig_EX1to3_initial}
\end{figure}

\begin{figure}
\centering
\includegraphics[width=0.99\textwidth,angle=0]{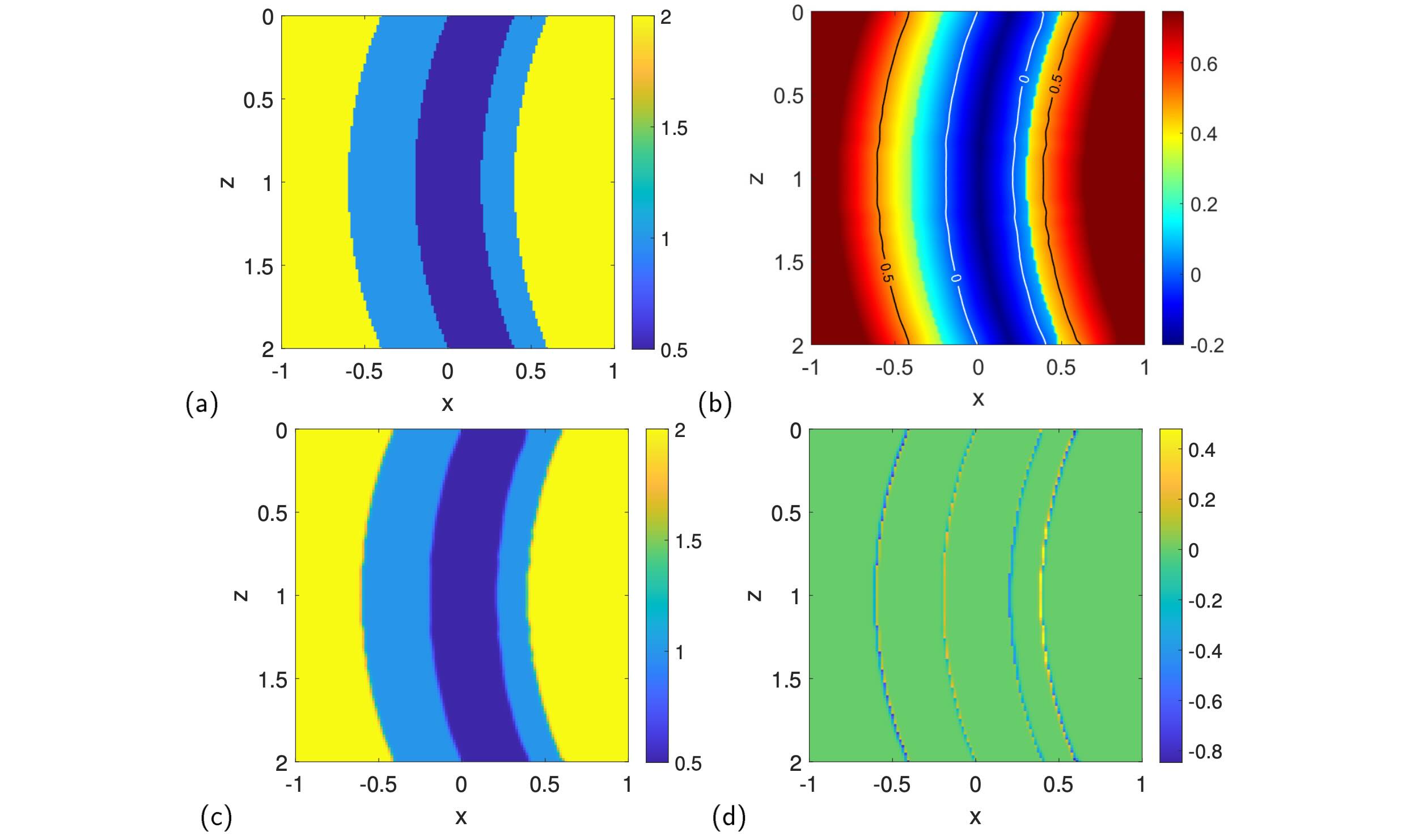}
\caption{Example 1: true model and recovered solution. The initial guess is shown in Figure \ref{Fig_EX1to3_initial}.  (a) True model of slowness $S_{\mathrm{true}}$; (b) recovered level-set function $\phi$; (c) recovered slowness $S$; (d) discrepancy $S-S_{\mathrm{true}}$.}
\label{Fig_EX1_1}
\end{figure}

\subsubsection{Example 2}
Figure \ref{Fig_EX2_1}\,(a) shows the true model of slowness.
It is a layered structure with a curved interface and a straight-line interface, where the slowness values are again given as $S_0(\mathbf{x})=0.5$, $S_1(\mathbf{x})=1.0$ and $S_2(\mathbf{x})=2.0$, respectively.
We consider an initial guess of $\phi(\mathbf{x})$ as shown in equation (\ref{eqn4.1}), which is the same as that of Example 1.
Figure \ref{Fig_EX1to3_initial} shows the initial guess.
The multilayer level-set inversion algorithm is then performed with an iteration step-size of $\epsilon=2\times10^{-3}$.
The penalization on $\phi$ is not employed in this example, i.e. $\gamma_\phi=0$.
Figure \ref{Fig_EX2_1}\,(b) shows the recovered level-set function.
Figure \ref{Fig_EX2_1}\,(c) plots the solution of slowness, and Figure \ref{Fig_EX2_1}\,(d) plots the discrepancy.
Again the mistake concentrates at the interface regions with discontinuities, and the solution almost perfectly recovers the true model.

\begin{figure}
\centering
\includegraphics[width=0.99\textwidth,angle=0]{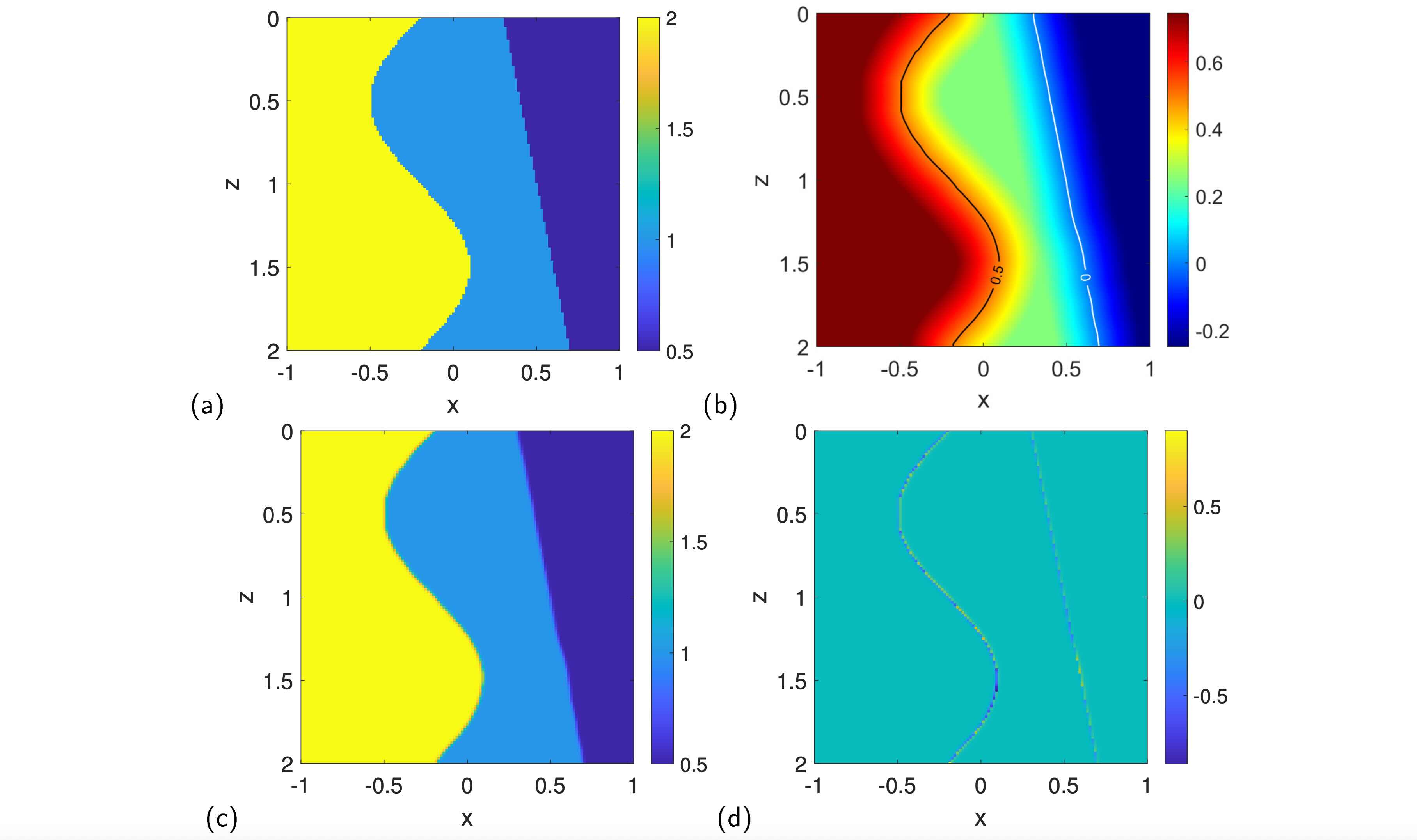}
\caption{Example 2: true model and recovered solution. The initial guess is shown in Figure \ref{Fig_EX1to3_initial}. (a) True model of slowness $S_{\mathrm{true}}$; (b) recovered level-set function $\phi$; (c) recovered slowness $S$; (d) discrepancy $S-S_{\mathrm{true}}$.}
\label{Fig_EX2_1}
\end{figure}

\subsubsection{Example 3}
Figure \ref{Fig_EX3_1} shows the true model of slowness.
A circle and a square, both with a slowness value of $S_0(\mathbf{x})=0.5$, are embedded within a rectangle that has a slowness value of $S_1(\mathbf{x})=1.0$.
The outermost region is homogeneous, with a slowness value of $S_2(\mathbf{x})=2.0$.
As before, we initialize the multilayer level set function $\phi(\mathbf{x})$ according to equation (\ref{eqn4.1}); Figure \ref{Fig_EX1to3_initial} plots the initial guess.
The multilayer level-set inversion algorithm is performed with an iteration step-size of $\epsilon=2\times10^{-3}$. 

To illustrate the effect of the penalization on $\phi$ as discussed in Section \ref{subsubsec_Penal}, we consider two inversions with $\gamma_\phi=0$ and $\gamma_\phi=0.01$, respectively.
Figure \ref{Fig_EX3_2} shows the inversion results, where the first row shows the recovered solution without the penalization on $\phi$, i.e. $\gamma_\phi=0$, and the second row shows the recovered solution with $\gamma_\phi=0.01$.
The inversion using the penalization on $\phi$ yields smoother interfaces in the solution. Given the complexity of the slowness model, both the two inversions provide satisfactory results.

\begin{figure}
\centering
\includegraphics[width=0.45\textwidth,angle=0]{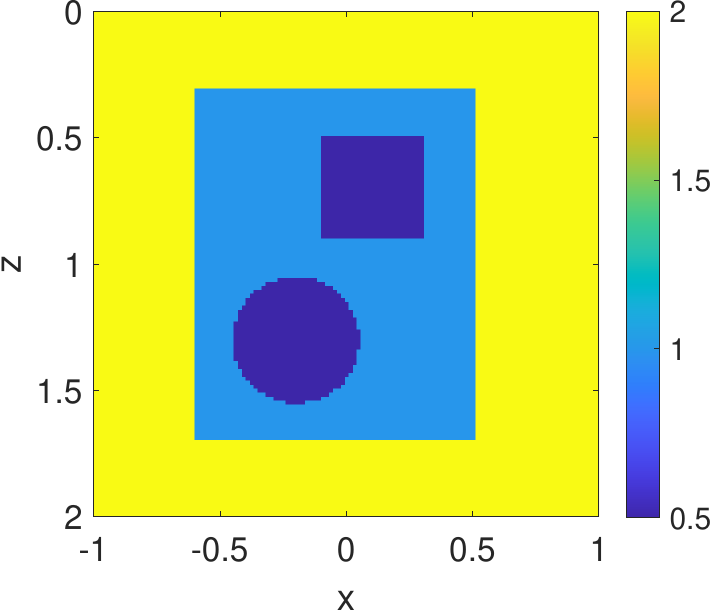}    
\caption{Example 3: true model $S_{\mathrm{true}}$.}
\label{Fig_EX3_1}
\end{figure}

\begin{figure}
\centering
\includegraphics[width=0.99\textwidth,angle=0]{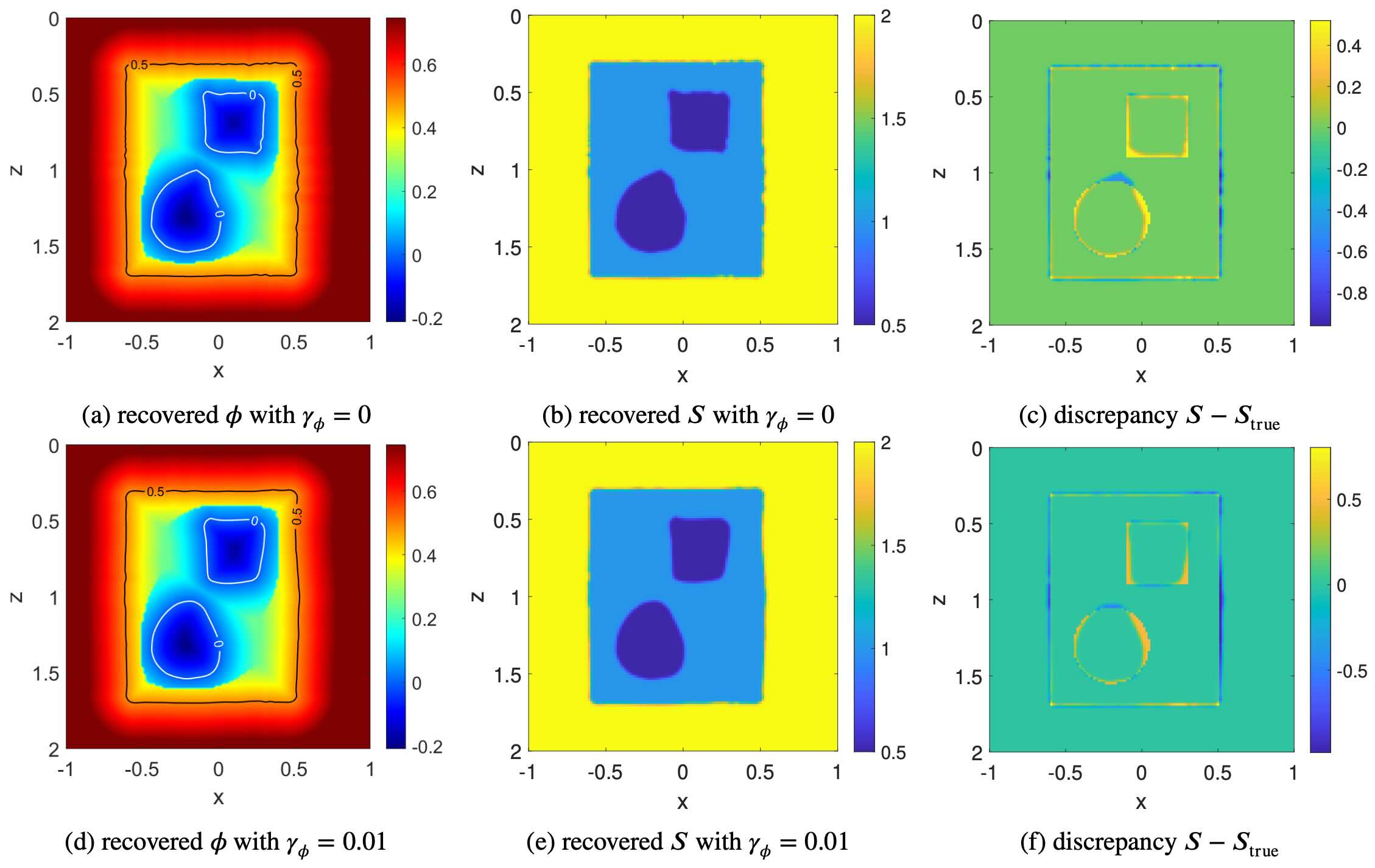}  
\caption{Example 3: recovered solutions with $\gamma_\phi=0$ and $\gamma_\phi=0.01$, respectively. The initial guess is shown in Figure \ref{Fig_EX1to3_initial}. The first row shows the inversion results without the penalization on $\phi$, i.e. $\gamma_\phi=0$. The second row shows the inversion results with $\gamma_\phi=0.01$.}
\label{Fig_EX3_2}
\end{figure}

\subsubsection{Example 4}
Figure \ref{Fig_EX4_1}\,(a) shows the true model of slowness, where the topology differs from that of examples 1-3.
The left triangle is defined by a non-constant slowness value $S_0(x,z)=z+0.1$; to its right, the homogeneous circle and square both have a slowness value of $S_2(\mathbf{x})=0.5$; the background region exhibits a slowness value of $S_1(\mathbf{x})=2.0$.

Considering the different topology, we initialize the multilayer level set function $\phi(\mathbf{x})$ in the following way,
\begin{equation} \label{eqn4.2}
\phi_0(x,z)=\left\{
\begin{array}{rl}
\min\left(\max\left(d_0(x,z), -0.25 \right), 0.25\right)\,, & |d_0(x,z)|<|d_1(x,z)| \\
0.5+\min\left(\max\left(d_1(x,z), -0.25 \right), 0.25\right)\,, & |d_0(x,z)|\ge|d_1(x,z)|
\end{array}
\right.\,,
\end{equation}
where
\begin{equation} \label{eqn4.3}
\left\{
\begin{array}{rl}
d_0(x,z)=\sqrt{(x+0.5)^2+(z-1)^2}-0.2 \,,\\
d_1(x,z)=0.2-\sqrt{(x-0.5)^2+(z-1)^2}\,.
\end{array}
\right.
\end{equation}
This initial level-set function $\phi_0(x,z)$ is similar to that in Figure \ref{Fig1}\,(b).
Consistently, the parameters $p_n$ in the multilayer level-set formulation are given by the values of $S_n$ according to equation (\ref{eqn2.9}).
Figure \ref{Fig_EX4_initial} plots the initial guess, where Figure \ref{Fig_EX4_initial}\,(a) plots the initial level-set function, and Figure \ref{Fig_EX4_initial}\,(b) plots the initial structure of slowness.

The multilayer level-set inversion algorithm is performed with an iteration step-size of $\epsilon=10^{-3}$.
The penalization on $\phi$ is not employed in this example, i.e. $\gamma_\phi=0$.
Figure \ref{Fig_EX4_1}\,(b) shows the recovered level-set function. Figure \ref{Fig_EX4_1}\,(c) plots the solution of slowness, and Figure \ref{Fig_EX4_1}\,(d) plots the discrepancy.
We conclude that the inversion result is quite good, considering the complexity of the slowness structure in this example.

\begin{figure}
\centering
\includegraphics[width=0.99\textwidth,angle=0]{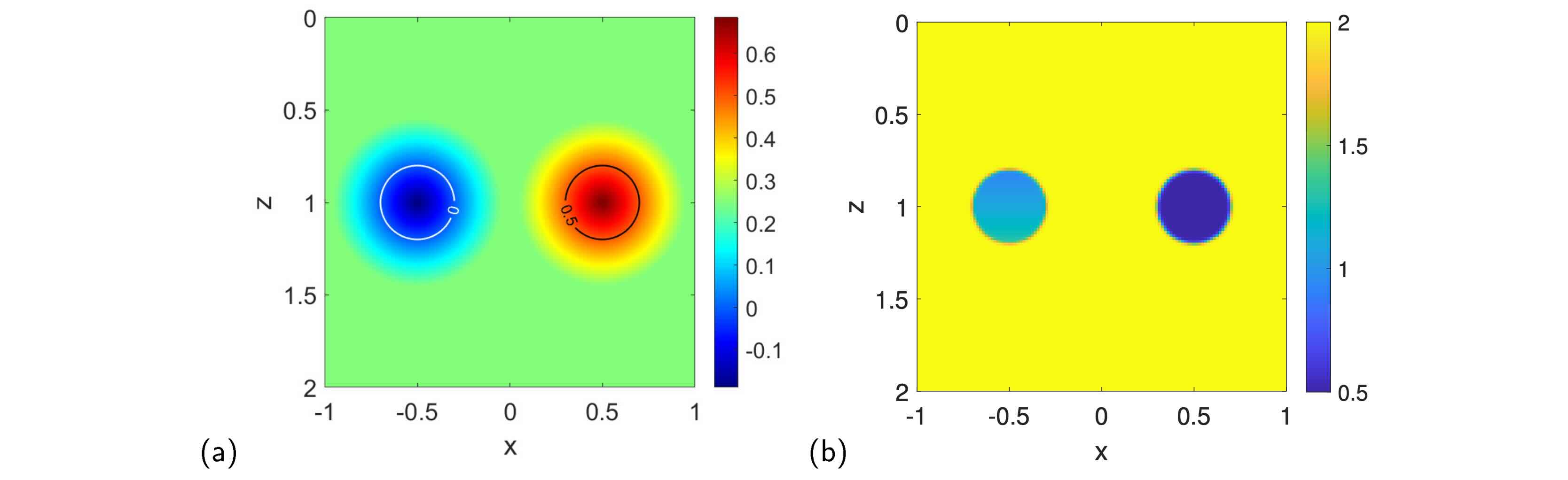}  
\caption{Initial guess for Example 4. (a) Initial guess of the multilayer level-set function; (b) initial structure of slowness.}
\label{Fig_EX4_initial}
\end{figure}

\begin{figure}
\centering
\includegraphics[width=0.99\textwidth,angle=0]{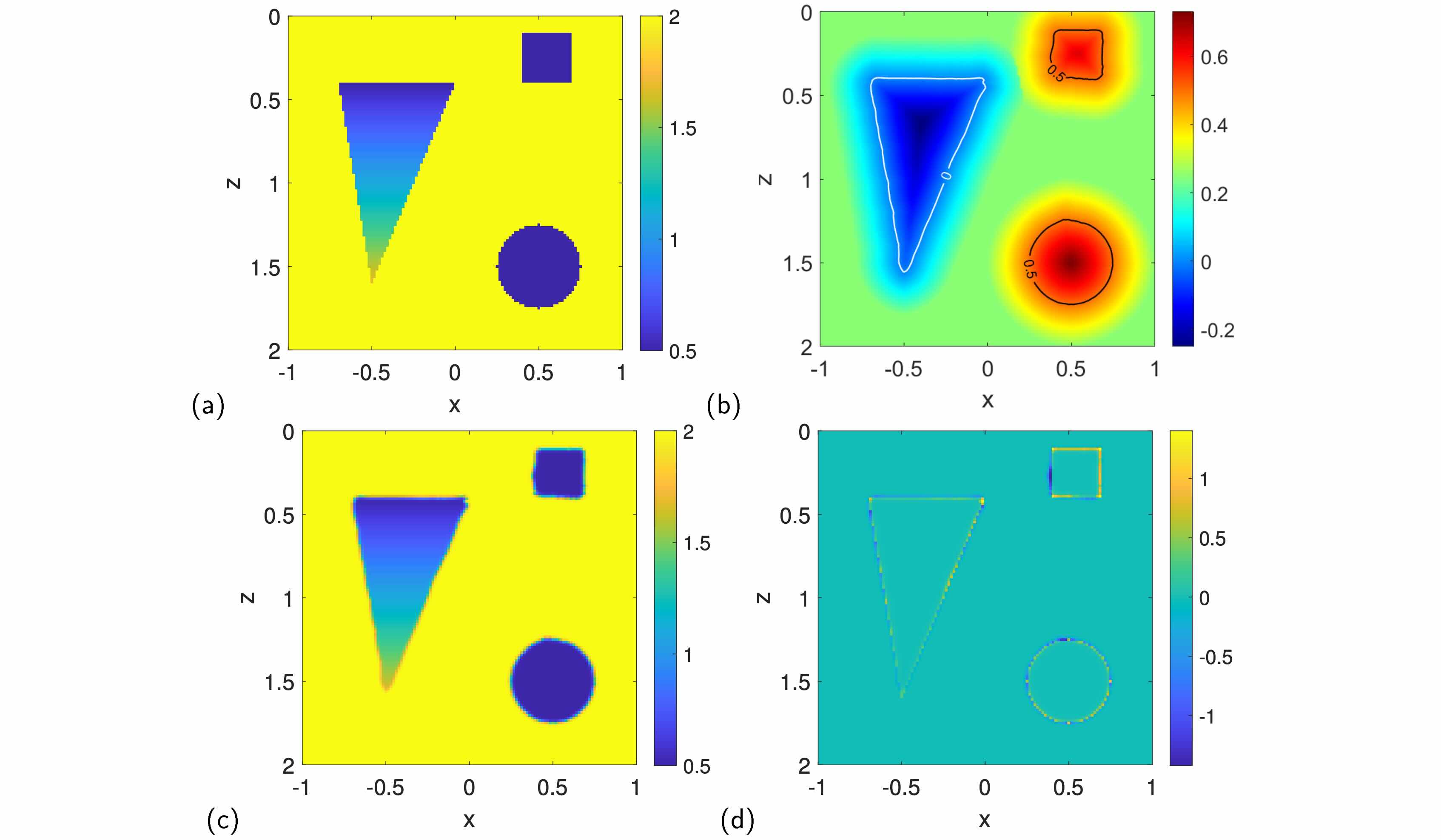}  
\caption{Example 4: true model and recovered solution. The initial guess is shown in Figure \ref{Fig_EX4_initial}. (a) True model of slowness $S_{\mathrm{true}}$; (b) recovered level-set function $\phi$; (c) recovered slowness $S$; (d) discrepancy $S-S_{\mathrm{true}}$.}
\label{Fig_EX4_1}
\end{figure}

\subsection{Recovering both domains/interfaces and slowness parameters}
In this part, we recover both the multilayer level-set function $\phi(\mathbf{x})$ and the slowness parameters $p_n\replytwo{(\mathbf{x})}$.
As discussed in Section \ref{subsubregul3}, the Sobolev regularization is performed on the \replyone{update} of $p_n\replytwo{(\mathbf{x})}$; here, the parameter $\gamma$ in equation (\ref{eqn3.15}) is taken as $\gamma=1$.

\subsubsection{Example 5}
The true model is the same as that of Example 1.
As shown in Figure \ref{Fig_EX5_1}\,(a), the slowness values of this layered structure are set as $S_0(\mathbf{x})=0.5$, $S_1(\mathbf{x})=1.0$ and $S_2(\mathbf{x})=2.0$.
We assume that the outermost slowness value $S_2(\mathbf{x})$ is given as a priori information, and recover the inner-region slowness functions $S_0(\mathbf{x})$ and $S_1(\mathbf{x})$; meanwhile, we reconstruct the interface locations.
According to equation (\ref{eqn2.7}), the slowness model is represented as
\begin{equation}
S(\mathbf{x})=p_0(\mathbf{x})\left(1-H(\phi\replytwo{(\mathbf{x})}) \right) +p_1(\mathbf{x})\left(1-H(\phi\replytwo{(\mathbf{x})}-0.5) \right) + p_2(\mathbf{x}) H\left(\phi\replytwo{(\mathbf{x})}-0.5\right)\,,
\end{equation}
where we set $N=2$ and $i_n=\frac{n}{2}$.
Considering equation (\ref{eqn2.9}), the true values of $p_n(\mathbf{x})$, ($n=0,1,2$) are $p_0(\mathbf{x})=S_0(\mathbf{x})-S_1(\mathbf{x})=-0.5$, $p_1(\mathbf{x})=S_1(\mathbf{x})=1.0$, and $p_2(\mathbf{x})=S_2(\mathbf{x})=2.0$.
Since the value of $S_2(\mathbf{x})$ is given as a priori information, we freeze $p_2(\mathbf{x})=2.0$, and reconstruct $p_0(\mathbf{x})$, $p_1(\mathbf{x})$, and the multilayer level-set function $\phi(\mathbf{x})$ simultaneously.

Figure \ref{Fig_EX5_6_initial} shows the initial guess for the inversion.
Here, the multilayer level-set function is initialized as equation (\ref{eqn4.1}), and the slowness functions are initialized as $S_0(\mathbf{x})=S_1(\mathbf{x})=1.2$, and so $p_0(\mathbf{x})=0$ and $p_1(\mathbf{x})=1.2$.
The initial guess is designed to consist of little information of the true model.
The multilayer level-set inversion algorithm is then performed with an iteration step-size of $\epsilon=2\times10^{-3}$.
The penalization on $\phi$ is not employed in this example, i.e. $\gamma_\phi=0$.
Figure \ref{Fig_EX5_1}\,(b) shows the recovered level-set function, Figure \ref{Fig_EX5_1}\,(c) shows the recovered slowness, and Figure \ref{Fig_EX5_1}\,(d) plots the discrepancy.
The solution closely matches the true model, where both the interfaces and slowness values are well recovered.

\begin{figure}
\centering
\includegraphics[width=0.99\textwidth,angle=0]{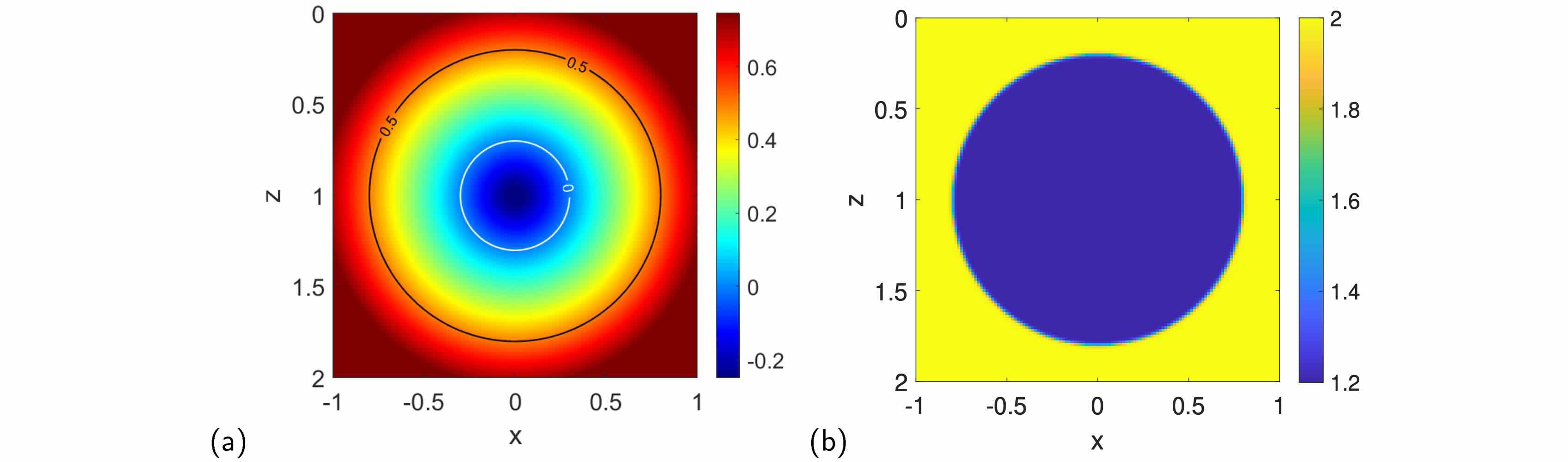}  
\caption{Initial guess for Example 5, Example 6, and Example 7. (a) Initial guess of the multilayer level-set function; (b) initial structure of slowness.}
\label{Fig_EX5_6_initial}
\end{figure}

\begin{figure}
\centering
\includegraphics[width=0.99\textwidth,angle=0]{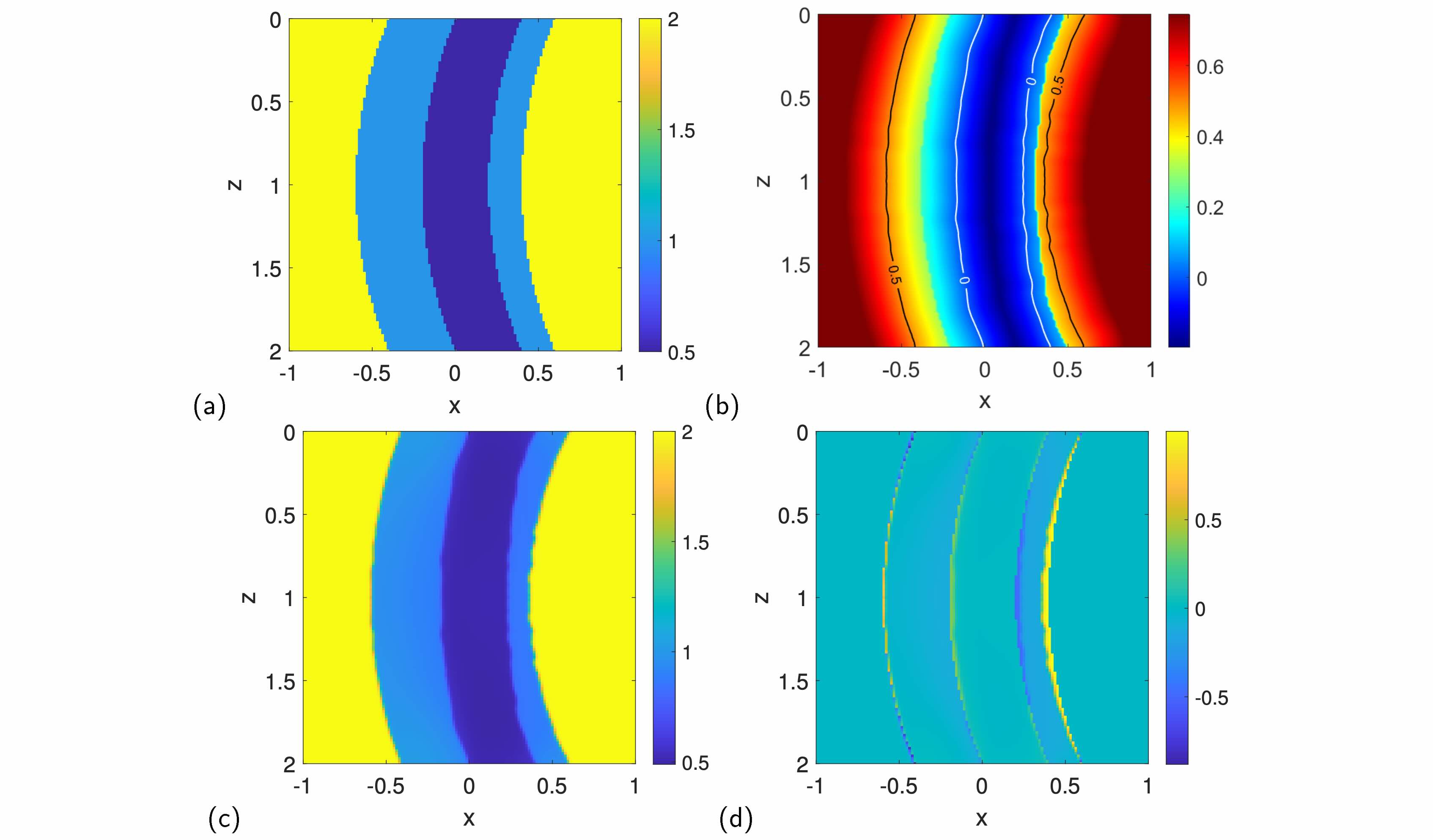}  
\caption{Example 5: recovering both domains and slowness parameters; true model and recovered solution. The initial guess is shown in Figure \ref{Fig_EX5_6_initial}. (a) True model of slowness $S_{\mathrm{true}}$; (b) recovered level-set function $\phi$; (c) recovered slowness $S$; (d) discrepancy $S-S_{\mathrm{true}}$.}
\label{Fig_EX5_1}
\end{figure}

\subsubsection{Example 6}
Figure \ref{Fig_EX6_1}\,(a) shows the true model.
The slowness values are set as $S_2(\mathbf{x})=2.0$, $S_1(\mathbf{x})=1.0$, and $S_0(\mathbf{x})=0.5 \exp\left\{2\left((x+0.3)^2+(z-1)^2)\right)\right\}$, respectively; the inner-region ellipse has a non-constant slowness.
Again we assume that the outermost slowness value $S_2(\mathbf{x})=2.0$ is given as a priori information, and reconstruct the inner-region slowness functions as well as the interface locations. 
The initial guess for the inversion is the same as that of Example 5, which is shown in Figure \ref{Fig_EX5_6_initial}.
The multilayer level-set inversion  is performed without the penalization term $E_r(\phi)$, i.e. $\gamma_\phi=0$.
Figure \ref{Fig_EX6_1}\,(b) shows the recovered multilayer level-set function, Figure \ref{Fig_EX6_1}\,(c) shows the recovered slowness, and Figure \ref{Fig_EX6_1}\,(d) plots the discrepancy.
We conclude that the solution matches well with the true model, where the structure with non-constant slowness is successfully recovered.

\begin{figure}
\centering
\includegraphics[width=0.99\textwidth,angle=0]{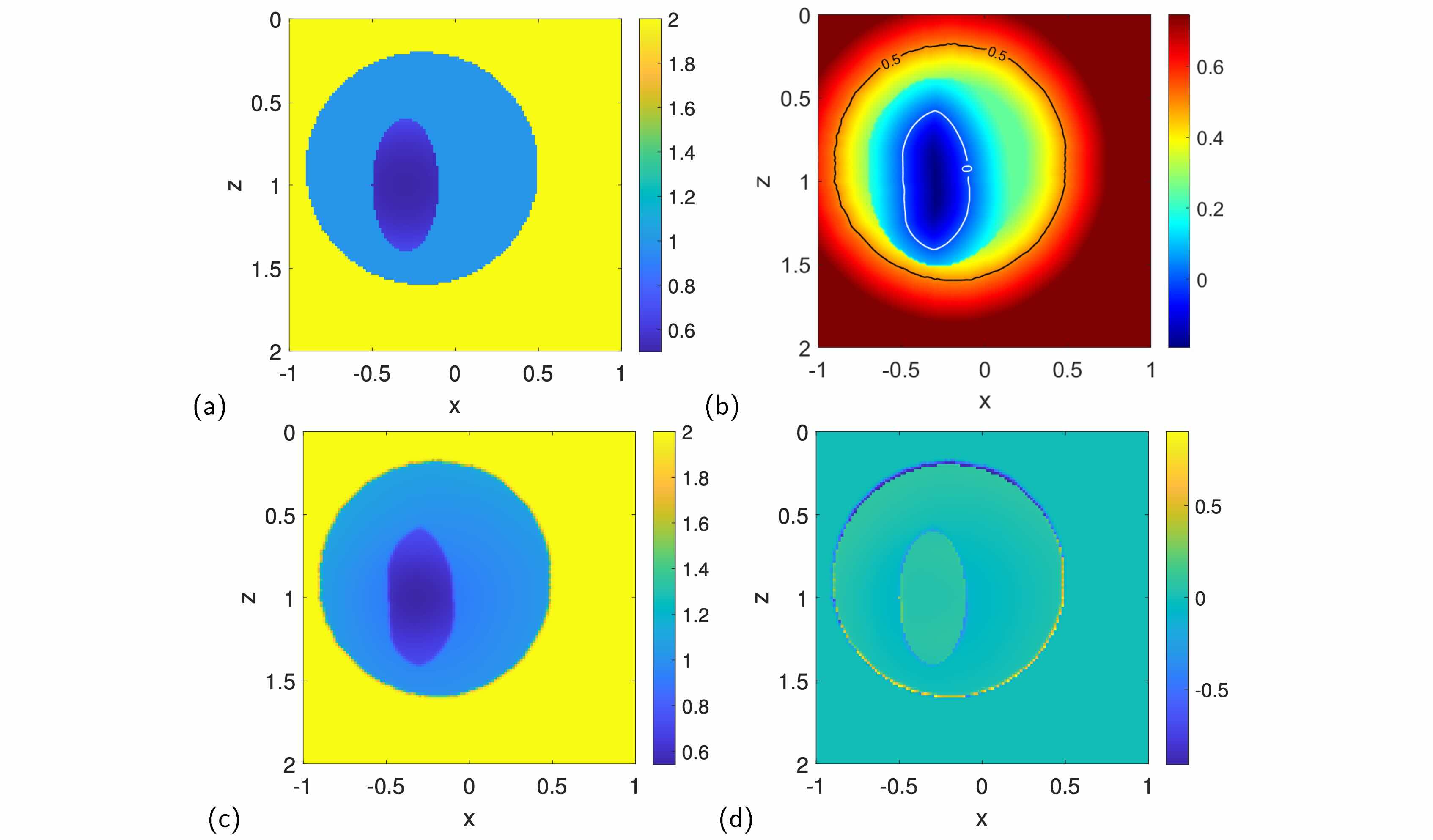}  
\caption{Example 6: recovering both domains and slowness parameters; true model and recovered solution. The initial guess is shown in Figure \ref{Fig_EX5_6_initial}. (a) True model of slowness $S_{\mathrm{true}}$; (b) recovered level-set function $\phi$; (c) recovered slowness $S$; (d) discrepancy $S-S_{\mathrm{true}}$.}
\label{Fig_EX6_1}
\end{figure}

\subsubsection{Example 7}
We provide an example to illustrate the error measure \replyone{which considers} illumination.
Figure \ref{Fig_EX7}\,(a) shows the true model.
The background region has a constant slowness of $S_2(\mathbf{x})=2.0$, the elliptic region is defined with a slowness of $S_1(\mathbf{x})=1.5 \exp\left\{-\left((x-0.1)^2+(z-1)^2)\right)\right\}$, and the inner rectangular region has a slowness of $S_0(\mathbf{x})=3.0$.
As before, we assume that the outermost slowness value, $S_2(\mathbf{x})=2.0$, is known a priori.
The objective is to reconstruct the inner-region slowness functions and delineate the interface locations.

The initial guess for the inversion is the same as that of Example 5, which is shown in Figure \ref{Fig_EX5_6_initial}.
The penalization on $\phi$ with a weight of $\gamma_\phi=0.01$ is applied in this example.
Figure \ref{Fig_EX7}\,(b) shows the recovered level-set function, Figure \ref{Fig_EX7}\,(c) shows the solution of slowness, and Figure \ref{Fig_EX7}\,(d) plots the discrepancy.
A noticeable imperfection in the solution occurs in the inner region with large slowness, which is likely attributable to a lack of illumination.
To account for this, we employ the error measure that considers illumination, as described in Section \ref{subsec_illu}. 
We first calculate the total illumination function $F(\mathbf{x})$ from the true model $S_\mathrm{true}$.
The error $e_F(\mathbf{x})$ is then computed according to equation (\ref{eqn3.24}). 
The resulting $F(\mathbf{x})$ and $e_F(\mathbf{x})$ are plotted in Figures \ref{Fig_EX7}\,(e) and \ref{Fig_EX7}\,(f), respectively.
As seen in Figure \ref{Fig_EX7}\,\replyone{(f)}, the error $e_F$ is primarily concentrated in regions with sharp interfaces, and the large discrepancy due to poor illumination is appropriately tolerated.
This error measure provides a more equitable assessment of the inversion algorithm's performance.

\begin{figure}
\centering
\includegraphics[width=0.75\textwidth,angle=0]{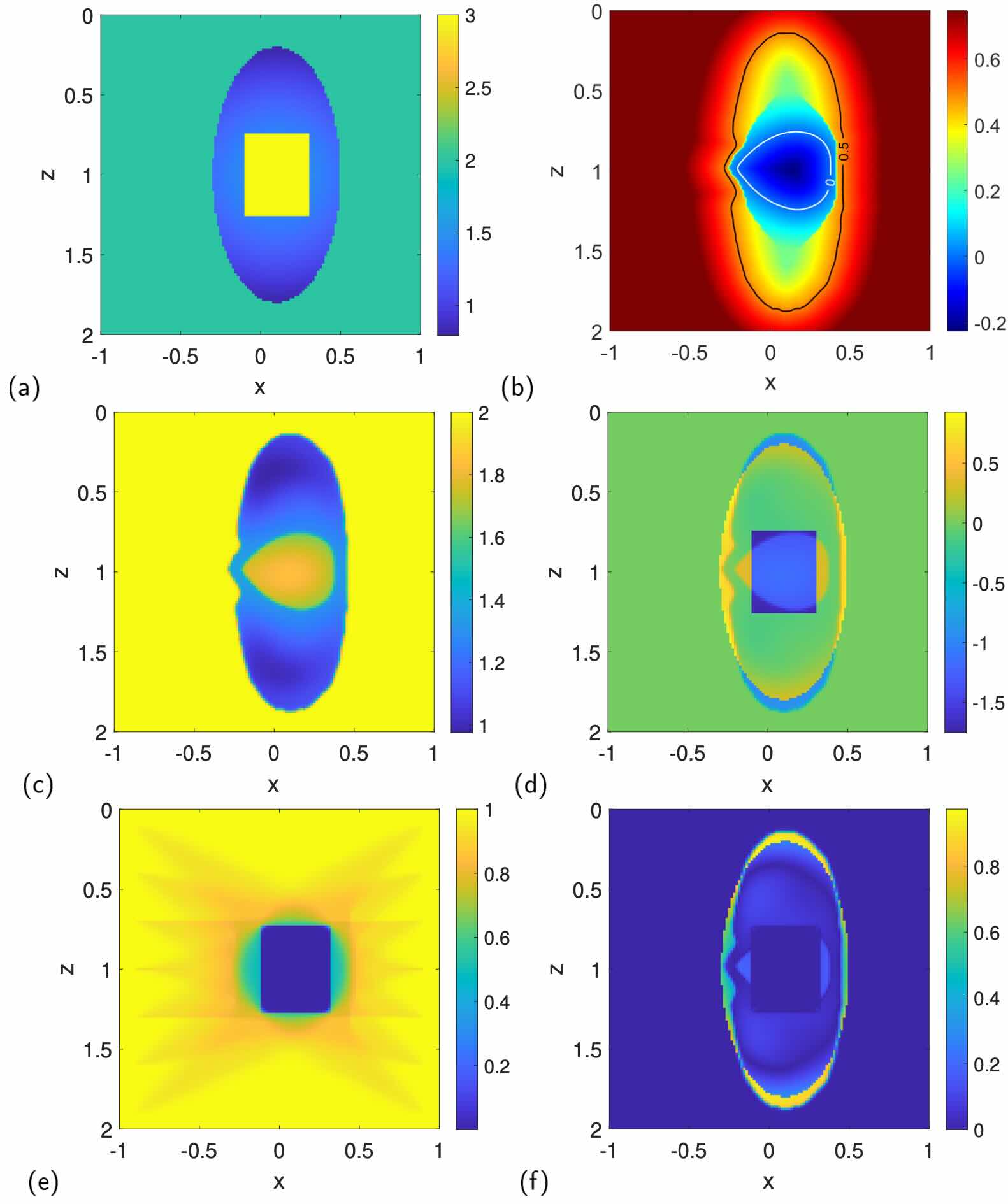}  
\caption{Example 7: recovering both domains and slowness parameters. The initial guess is shown in Figure \ref{Fig_EX5_6_initial}. (a) True model of slowness $S_{\mathrm{true}}$; (b) recovered level-set function $\phi$; (c) recovered slowness $S$; (d) discrepancy $S-S_{\mathrm{true}}$; (e) total illumination $F(\mathbf{x})$ for the slowness model $S_\mathrm{true}$; (f) error measure considering illumination $e_F(\mathbf{x})$.}
\label{Fig_EX7}
\end{figure}

\section{Conclusions}
We have developed a multilayer level-set method (MLSM) for eikonal-based first-arrival traveltime tomography.
In contrast to conventional level-set methods, which use only the zero-level set for interface representation, the MLSM defines the interfaces of multiple phases using a series of $i_n$-level-sets. 
Near each $i_n$-level-set, the multilayer level-set function is designed to behave as a local signed-distance function. Within this framework, a single level-set function can delineate arbitrarily many distinct interfaces and \replyone{subregions}.
We have detailed the representation of piecewise structures with multiple interfaces using the MLSM, and developed a reinitialization strategy for the \replyone{multilayer} level-set function.
After demonstrating the performance of the MLSM through various representative examples of moving interfaces, we propose it for eikonal-based first-arrival traveltime tomography.
The multilayer level-set function is employed to represent discontinuous slowness structures with multiple phases and interfaces.
To solve the tomography problem, we adopt an Eulerian framework, modeling the first-arrival traveltime via the viscosity solution of the eikonal equation, and calculating the Fr\'echet derivatives of the data misfit by the adjoint state method.
Comprehensive regularization techniques are proposed for the MLSM formulation, including reinitialization, arc-length penalization of the multilayer level-set function, and a Sobolev regularization for updating the MLSM parameters.
In addition, we introduce an illumination-based error measure to evaluate the performance of the inversion algorithm.
Extensive numerical examples illustrate the method's efficacy.
The results demonstrate that the MLSM is an effective and promising tool for reconstructing multiphase structures in traveltime tomography.

\section*{Acknowledgments}
Wenbin Li is supported by the Natural Science Foundation of Shenzhen (JCYJ20240813104841055), and the Fundamental Research Funds for the Central Universities (HIT.OCEF.2024017). The work of Leung was supported in part by the Hong Kong RGC grants 16302223 and 16300524.

\section*{Declaration of generative AI and AI-assisted technologies in the manuscript preparation process}

During the preparation of this work, the authors used AI to improve the clarity, grammar, and overall flow of the language. After using this tool, the author(s) reviewed and edited the content as needed and take(s) full responsibility for the content of the published article.

\bibliographystyle{plain}
\bibliography{myref}

\begin{thebibliography}{10}

\bibitem{allgrebihboggod23}
Gr\'{e}goire Allaire, Martin Bihr, Beniamin Bogosel, and Matias Godoy.
\newblock Accessibility constraints in structural optimization via distance
  functions.
\newblock {\em J. Comput. Phys.}, 484:Paper No. 112083, 31, 2023.

\bibitem{benmil07}
M.~K. Ben Hadj~Miled and E.~L. Miller.
\newblock A projection-based level-set approach to enhance conductivity anomaly
  reconstruction in electrical resistance tomography.
\newblock {\em Inverse {P}roblem}, 23:2375--2400, 2007.

\bibitem{ber00a}
J.~Berryman.
\newblock Analysis of approximate inverses in tomography {I}. resolution
  analysis of common inverses.
\newblock {\em Optimization and Engineering}, 1:87--115, 2000.

\bibitem{ber00b}
J.~Berryman.
\newblock Analysis of approximate inverses in tomography {II}. iterative
  inverses.
\newblock {\em Optimization and Engineering}, 1:437--473, 2000.

\bibitem{borgerlinetc87}
R~Phillip Bording, Adam Gersztenkorn, Larry~R Lines, John~A Scales, and Sven
  Treitel.
\newblock Applications of seismic travel-time tomography.
\newblock {\em Geophysical Journal International}, 90(2):285--303, 1987.

\bibitem{bur03}
Martin Burger.
\newblock A framework for the construction of level set methods for shape
  optimization and reconstruction.
\newblock {\em Interfaces Free Bound.}, 5(3):301--329, 2003.

\bibitem{chuleu21}
Kwunlun Chu and Shingyu Leung.
\newblock A level set method for the dirichlet k-partition problem.
\newblock {\em Journal of Scientific Computing}, 86(1):11, 2021.

\bibitem{cralio83}
M.~G. Crandall and P.~L. Lions.
\newblock Viscosity solutions of {Hamilton-Jacobi} equations.
\newblock {\em Trans. Amer. Math. Soc.}, 277:1--42, 1983.

\bibitem{degou06}
Fr\'ed\'eric de~Gournay.
\newblock Velocity extension for the level-set method and multiple eigenvalues
  in shape optimization.
\newblock {\em SIAM J. Control Optim.}, 45(1):343--367, 2006.

\bibitem{decleitai09}
A.~DeCezaro, A.~Leitao, and X-C Tai.
\newblock On multiple level-set regularization methods for inverse problems.
\newblock {\em Inverse Problems}, 25(3):035004, 2009.

\bibitem{dellai95}
F.~Delprat-Jannaud and P.~Lailly.
\newblock Reflection tomography: how to handle multiple arrivals?
\newblock {\em J. Geophys. Res.}, 100:703--715, 1995.

\bibitem{detmilgibmin13}
Miles Detrixhe, Fr\'ed\'eric Gibou, and Chohong Min.
\newblock A parallel fast sweeping method for the {E}ikonal equation.
\newblock {\em J. Comput. Phys.}, 237:46--55, 2013.

\bibitem{dorles06}
O.~Dorn and D.~Lesselier.
\newblock Level set methods for inverse scattering.
\newblock {\em Inverse Problems}, 22:{R}67--{R}131, 2006.

\bibitem{gabvan24}
J.~Gabbard and W.M. van Rees.
\newblock A high-order finite difference method for moving immersed domain
  boundaries and material interfaces.
\newblock {\em J. Comput. Phys.}, 507(112979), 2024.

\bibitem{glosha77}
Gray~H Glover and JC~Sharp.
\newblock Reconstruction of ultrasound propagation speed distributions in soft
  tissue: time-of-flight tomography.
\newblock {\em IEEE Transactions on sonics and ultrasonics}, 24(4):229--234,
  1977.

\bibitem{grisvard85}
Pierre Grisvard.
\newblock {\em Elliptic Problems in Nonsmooth Domains}.
\newblock Pitman, 1985.

\bibitem{hoocar14}
Fong~Ming Hooi and Paul~L Carson.
\newblock First-arrival traveltime sound speed inversion with a priori
  information.
\newblock {\em Medical physics}, 41(8Part1):082902, 2014.

\bibitem{housolzha04}
S.~Hou, K.~Solna, and H.-K. Zhao.
\newblock Imaging of location and geometry for extended targets using the
  response matrix.
\newblock {\em J. Comput. Phys.}, 199:317--338, 2004.

\bibitem{isaleuqia13}
V.~Isakov, S.~Leung, and J.~Qian.
\newblock A three-dimensional inverse gravimetry problem for ice with snow
  caps.
\newblock {\em {Inverse Problems and Imaging}}, 7:523--544, 2013.

\bibitem{isaleuqia11}
Victor Isakov, Shingyu Leung, and Jianliang Qian.
\newblock A fast local level set method for inverse gravimetry.
\newblock {\em {C}ommunications in {C}omputational {P}hysics}, 10:1044--1070,
  2011.

\bibitem{kaiost01}
Joseph Kain and Daniel~N Ostrov.
\newblock Numerical shape-from-shading for discontinuous photographic images.
\newblock {\em International Journal of Computer Vision}, 44(3):163--173, 2001.

\bibitem{leuqia06}
S.~Leung and J.~Qian.
\newblock An adjoint state method for three-dimensional transmission traveltime
  tomography using first-arrivals.
\newblock {\em Comm. Math. Sci.}, 4:249--266, 2006.

\bibitem{leuqia07}
S.~Leung and J.~Qian.
\newblock Transmission traveltime tomography based on paraxial {L}iouville
  equations and level set formulations.
\newblock {\em Inverse Problems}, 23:799--821, 2007.

\bibitem{lileu13}
W.~Li and S.~Leung.
\newblock A fast local level set based adjoint state method for first arrival
  transmission traveltime tomography with discontinuous slowness.
\newblock {\em {G}eophysical {J}ournal {I}nternational}, 195:582--596, 2013.

\bibitem{lileuqia14}
Wenbin Li, Shingyu Leung, and Jianliang Qian.
\newblock A level-set adjoint-state method for crosswell
  transmission-reflection traveltime tomography.
\newblock {\em Geophysical Journal International}, 199(1):348--367, 2014.

\bibitem{liqia20newton}
Wenbin Li and Jianliang Qian.
\newblock Newton-type gauss--seidel lax--friedrichs high-order fast sweeping
  methods for solving generalized eikonal equations at large-scale
  discretization.
\newblock {\em Computers \& Mathematics with Applications}, 79(4):1222--1239,
  2020.

\bibitem{liqia21}
Wenbin Li and Jianliang Qian.
\newblock Simultaneously recovering both domain and varying density in inverse
  gravimetry by efficient level-set methods.
\newblock {\em Inverse Problems and Imaging}, 15(3):387--413, 2021.

\bibitem{liqiali20}
Wenbin Li, Jianliang Qian, and Yaoguo Li.
\newblock Joint inversion of surface and borehole magnetic data: A level-set
  approach.
\newblock {\em Geophysics}, 85(1):J15--J32, 2020.

\bibitem{liwanfan22}
Wenbin Li, Kangzhi Wang, and Tingting Fan.
\newblock A stochastic gradient descent approach with partitioned-truncated
  singular value decomposition for large-scale inverse problems of magnetic
  modulus data.
\newblock {\em Inverse Problems}, 38(7):075002, 2022.

\bibitem{litlessan98}
A.~Litman, D.~Lesselier, and F.~Santosa.
\newblock Reconstruction of a two-dimensional binary obstacle by controlled
  evolution of a level-set.
\newblock {\em Inverse Problems}, 14:685--706, 1998.

\bibitem{mohpir10}
B.~Mohammadi and Olivier. Pironneau.
\newblock {\em Applied shape optimization for fluids}.
\newblock Numerical mathematics and scientific computation. Oxford University
  Press, Oxford ;, 2nd ed. edition, 2010.

\bibitem{nol09}
G.~Nolet.
\newblock {\em A Breviary of Seismic Tomography}.
\newblock Cambridge Univ. Press, 2008.

\bibitem{oshset88}
S.~J. Osher and J.~A. Sethian.
\newblock Fronts propagating with curvature dependent speed: algorithms based
  on {Hamilton-Jacobi} formulations.
\newblock {\em J. Comput. Phys.}, 79:12--49, 1988.

\bibitem{oshfed06}
Stanley Osher and Ronald Fedkiw.
\newblock {\em Level set methods and dynamic implicit surfaces}, volume 153.
\newblock Springer Science \& Business Media, 2006.

\bibitem{ost00}
Daniel~N Ostrov.
\newblock Extending viscosity solutions to eikonal equations with discontinuous
  spatial dependence.
\newblock {\em Nonlinear analysis: theory, methods \& applications},
  42(4):709--736, 2000.

\bibitem{ost20}
Daniel~N Ostrov.
\newblock Extending viscosity solutions to eikonal equations with discontinuous
  spatial dependence.
\newblock {\em Nonlinear analysis, theory, methods \& applications},
  42(4):709--736, 2000.

\bibitem{ple06}
R.~E. Plessix.
\newblock A review of the adjoint-state method for computing the gradient of a
  functional with geophysical applications.
\newblock {\em {G}eophysical {J}ournal {I}nternational}, 167:495--503, 2006.

\bibitem{qiazhazha07}
J.~Qian, Y.~T. Zhang, and H.~K. Zhao.
\newblock Fast sweeping methods for eikonal equations on triangulated meshes.
\newblock {\em SIAM J. Numer. Analy.}, 45:83--107, 2007.

\bibitem{san96}
F.~Santosa.
\newblock A level-set approach for inverse problems involving obstacles.
\newblock {\em Control, Optimizat. Calculus Variat.}, 1:17--33, 1996.

\bibitem{set99siamrev}
J.~A. Sethian.
\newblock Fast marching methods.
\newblock {\em SIAM Review}, 41:199--235, 1999.

\bibitem{spe05}
P.D.M. Spelt.
\newblock A level-set approach for simulations of flows with multiple moving
  contact lines with hysteresis.
\newblock {\em J. Comput. Phys.}, 207:389--404, 2005.

\bibitem{tainobchacal09}
C.~Taillandier, M.~Noble, H.~Chauris, and H.~Calandra.
\newblock First-arrival traveltime tomography based on the adjoint-state
  method.
\newblock {\em Geophysics}, 74:WCB1--WCB10, 2009.

\bibitem{vanasclei10}
K.~van~den Doel, U.~M. Ascher, and A.~Leitao.
\newblock Multiple level sets for piecewise constant surface reconstruction in
  highly ill-posed problems.
\newblock {\em Journal of Scientific Computing}, 43(1):44--66, 2010.

\bibitem{vescha02}
Luminita~A Vese and Tony~F Chan.
\newblock A multiphase level set framework for image segmentation using the
  mumford and shah model.
\newblock {\em International journal of computer vision}, 50(3):271--293, 2002.

\bibitem{zahguskre09}
S.~Zahedi, K.~Gustavsson, and G.~Kreiss.
\newblock A conservative level set method for contact line dynamics.
\newblock {\em J. Comput. Phys.}, 228:6361--6375, 2009.

\bibitem{zelsmi92}
C.~A. Zelt and R.~B. Smith.
\newblock Seismic traveltime inversion for 2-d crustal velocity structure.
\newblock {\em Geophysical Journal International}, 108:16--34, 1992.

\bibitem{zha05}
H.~K. Zhao.
\newblock Fast sweeping method for eikonal equations.
\newblock {\em Math. Comp.}, 74:603--627, 2005.

\bibitem{zha07}
H.~K. Zhao.
\newblock Parallel implementations of the fast sweeping method.
\newblock {\em J. Comput. Math.}, 25:421--429, 2007.

\bibitem{zhachamerosh96}
H.~K. Zhao, T.~Chan, B.~Merriman, and S.~Osher.
\newblock A variational level set approach to multiphase motion.
\newblock {\em Journal of computational physics}, 127(1):179--195, 1996.

\bibitem{zhaxuren22}
Q.~Zhao, S.~Xu, and W.~Ren.
\newblock A level set method for simulation of moving contact lines in three
  dimensions.
\newblock {\em Commun. Comput. Phys.}, 32(5):1310--1331, 2022.

\end{thebibliography}

\end{document}